\renewcommand{\maketag@@@}[1]{\hbox{\m@th\normalsize\normalfont#1}}%
\newcommand{\KP}[1]{{\color{blue} #1}}
\journal{European Journal of Control}
\begin{document}

\newtheorem{remark}{Remark}
\newtheorem{theorem}{Theorem}
\newtheorem{corollary}{Corollary}
\newtheorem{lemma}{Lemma}
\newtheorem{proof}{Proof}
\newtheorem{assumption}{Assumption}

\begin{frontmatter}



\title{Distributed online constrained convex optimization \\with event-triggered communication}

 \author[label1]{Kunpeng Zhang}
 \ead{2110343@stu.neu.edu.cn}
 \affiliation[label1]{organization={State Key Laboratory of Synthetical Automation for Process Industries, Northeastern University},
             city={Shenyang},
             postcode={110819},
             country={China}}

  \author[label2]{Xinlei Yi}
   \ead{xinleiyi@mit.edu}
 \affiliation[label2]{organization={Laboratory for Information and Decision Systems, Massachusetts Institute of Technology},
             city={Cambridge},
             postcode={MA 02139},
             country={USA}}

 \author[label1]{Yuzhe Li}
\ead{yuzheli@mail.neu.edu.cn}

 \author[label3]{Ming Cao}
 \ead{m.cao@rug.nl}
 \affiliation[label3]{organization={Engineering and Technology Institute Groningen, University of Groningen},
             addressline={AG 9747},
             city={Groningen},
             country={The Netherlands}}

\author[label1]{Tianyou Chai}
\ead{tychai@mail.neu.edu.cn}

 \author[label1]{Tao Yang\corref{cor1}}
 \ead{yangtao@mail.neu.edu.cn}
 \cortext[cor1]{Corresponding author.}


\begin{abstract}
This paper focuses on the distributed online convex optimization problem with time-varying inequality constraints over a network of agents, where each agent collaborates with its neighboring agents to minimize the cumulative network-wide loss over time. To reduce communication overhead between the agents, we propose a distributed event-triggered online primal--dual algorithm over a time-varying directed graph.
With several classes of appropriately chose decreasing parameter sequences and non-increasing event-triggered threshold sequences, we establish dynamic network regret and network cumulative constraint violation bounds.
Finally, a numerical simulation example is provided to verify the theoretical results.
\end{abstract}

\begin{keyword}
Cumulative constraint violation \sep event-triggered communication \sep inequality constraints \sep online convex optimization
\end{keyword}

\end{frontmatter}


\section{Introduction}
Distributed optimization has wide applications in sensor networks \cite{Rabbat2004}, machine learning \cite{Nedic2020} and power systems \cite{Molzahn2017}, where a network of agents aims at minimizing the average of all the local cost functions by exchanging local information of the agents. Distributed optimization can be traced back at least to \cite{Tsitsiklis1986, Bertsekas1989}, and the past decades have witnessed its rapid development, see survey papers \cite{Cao2012, Nedic2018, Yang2019}. However, the local loss functions are static, which may not be applicable to dynamic and uncertain environments \cite{Xiong2020}.

Distributed online convex optimization is a promising framework due to its powerful modeling capability for various problems in dynamic, uncertain and even adversarial environments. In distributed online convex optimization, the agents collaboratively make decisions without knowing their local loss functions at the current iteration, and then the time-varying local loss functions are privately revealed. The goal is to minimize the cumulative network-wide loss over time. In general, static regret is the standard performance metric to evaluate online algorithms, which measures the difference of the cumulative loss between the decision sequence and the optimal static decision in hindsight.
Various distributed online algorithms with sublinear static regret have been developed, see \cite{Yan2012, Shahrampour2017, Lee2017, Zhang2017, Yuan2017, Yuan2021b, Yuan2022, Yi2020, Li2020, Li2022}, recent survey paper \cite{Li2023} and references therein.
For example, the authors of \cite{Yan2012} develop a projection-based distributed online subgradient descent algorithm by using Bregman divergence in lieu of Euclidean distance for projection, the authors of \cite{Shahrampour2017} propose a decentralized online mirror descent algorithm.

Note that the aforementioned distributed online algorithms require local information exchange between the agents via the underlying communication network at each iteration, which may cause large amount of communication overhead.
To overcome this limitation, by equipping event-triggered communication scheme to the algorithm proposed in \cite{Yan2012}, the authors of \cite{Cao2021} propose two distributed event-triggered algorithms with full-information feedback and bandit feedback over a fixed undirected graph, respectively.
Moreover, sublinear static regret is achieved for both algorithms when their event-triggering threshold sequences are non-increasing and converge to zero.
By using one-point and two-point subgradient estimators respectively, two distributed event-triggered algorithms with bandit feedback are developed in \cite{Xiong2023} for the fix delayed bandit feedback case and sublinear static regret is established for the algorithms.
The authors of \cite{Paul2022} develop a distributed event-triggered algorithm based on the algorithm proposed \cite{Shahrampour2017} and sublinear static regret is achieved.

It is worth mentioning that the above algorithms which achieve performance close to the best static regret may perform poorly in terms of dynamic regret \cite{Besbes2015}.
Dynamic regret is a more stringent performance metric, which measures the difference of the cumulative loss between the decision sequence and the best decision sequence selected by a clairvoyant that knows the sequence of loss functions in advance.
By using the first and second moments of the gradient of the local loss functions, the authors of \cite{Oakamoto2023} develop the event-triggered algorithm with full-information feedback in \cite{Cao2021} by imposing adaptive updating step-sizes, and analyze dynamic regret.

Most existing studies for online constrained convex optimization focus on the case where the feasible set is a simple closed convex set (a box or a ball).
To cope with more complex scenarios, the authors of \cite{Yuan2018, Yi2021a} characterize the feasible set by inequality constraints and a simple closed convex set for centralized online constrained convex optimization.
Note that projection operation is needed at each iteration in order to satisfy always inequality constraints, which results in heavy computation burden.
Therefore, the authors treat these inequality constraints as long-term constraints, i.e., the inequality constraints are allowed to be violated but are satisfied in the long run.
Inspired by \cite{Yuan2018, Yi2021a}, a distributed online primal--dual algorithm with full-information feedback is proposed in \cite{Yi2023}, where agents need to share their local decisions with their neighboring agents via the underlying communication topology at each iteration.
To reduce communication overhead, in this paper, we propose a distributed event-triggered online primal--dual algorithm over an uniformly jointly strongly connected time-varying directed graph by integrating event-triggered communication into the algorithm in~\cite{Yi2023}, where each agent broadcasts its current local decision to its neighboring agents only if norm of the difference between the decision and the last broadcasted decision is not less than the current event-triggering threshold. In addition, base on several classes of appropriately chosen event-triggered threshold sequences, we analyzes the impact of event-triggering threshold on dynamic network regret and network cumulative constraint violation.

The contributions are as follows.
\vspace{-2mm}
\begin{itemize}
\item[$\bullet$]
To the best of our knowledge, this paper is among the first to consider time-varying inequality constraints for distributed online convex optimization with event-triggered communication.
Compared to distributed event-triggered online algorithms \cite{Cao2021, Xiong2023, Paul2022, Oakamoto2023}, which only consider a simple closed convex constrained set and an undirected fixed communication graph, we consider time-varying inequality constraints and a directed time-varying graph.

\vspace{-2mm}

\item[$\bullet$]
In Theorem~1, we show that the proposed algorithm achieves sublinear dynamic network regret and network cumulative constraint violation if the path--length of the benchmark, the accumulated dynamic variation of the optimal decision sequence, grows sublinearly and the non-increasing event-triggering threshold sequence converges to zero.
With two classes of natural decreasing event-triggering threshold sequences, in Corollaries~1 and~2, we respectively establish in sublinear dynamic network regret and network cumulative constraint violation bounds.
These dynamic network regret bounds recover the results achieved by the centralized online algorithm without event-triggered communication in \cite{Hall2015}, and the distributed event-triggered online algorithm in \cite{Oakamoto2023}.

\vspace{-2mm}

\item[$\bullet$]
In Theorem~2, by appropriately designing the parameter sequences,
we avoid the impact of the event-triggering threshold on the updating step-sizes of the local primal variables, and establish sublinear dynamic network regret and network cumulative constraint violation bounds provided that the path–length of the benchmark grows sublinearly.
These bounds recover the results achieved by the distributed online algorithms in \cite{Yi2023} without event-triggered communication.
\end{itemize}

The remainder of this paper is as follows. Section~II presents the problem formulation and motivation. Section~III proposes the distributed event-triggered online primal--dual algorithm and the performance metrics.
Section~IV analyzes the performance of the proposed algorithm.
Section~V demonstrates numerical simulations.
Finally, Section~VI concludes the paper.
All detailed proofs are provided in Appendix.

\textbf{Notations:} ${\mathbb{N}_ + }$, $\mathbb{R}$, ${\mathbb{R}^p}$ and $\mathbb{R}_ + ^p$ denote the set of all positive integers, real numbers, $p$-dimensional vectors and nonnegative vectors, respectively. $[ n ]$ denotes the set $\{ {1, \cdot  \cdot  \cdot ,n} \}$ for any $n \in {\mathbb{N}_ + }$. Given vectors $x$ and $y$, ${x^T}$ denotes the transpose of the vector $x$, and $\langle {x,y} \rangle $ and $x \otimes y$ denote the standard inner and Kronecker product of the vectors $x$ and $y$, respectively. ${\mathbf{0}_m}$ denotes the $m$-dimensional column vector whose components are all $0$. $\mathrm{col}( {q_1}, \cdot  \cdot  \cdot ,{q_n} )$ denotes the concatenated column vector of ${q_i} \in {\mathbb{R}^{{m_i}}}$ for $i \in [ n ]$. For a set $\mathbb{K} \in {\mathbb{R}^p}$, ${\mathcal{P}_{\mathbb{K}}}(  \cdot  )$ denotes a projection operator, i.e., ${\mathcal{P}_{\mathbb{K}}}( x ) = \arg {\min _{y \in {\mathbb{K}}}}{\| {x - y} \|^2}$, $\forall x \in {\mathbb{R}^p}$, and $[  \cdot  ]_+$ denotes ${\mathcal{P}_{\mathbb{R}_ + ^p}}( \cdot )$. For a scalar function $f:{\mathbb{R}^p} \to \mathbb{R}$, $\partial f( x )$ denotes the subgradient of $f$ at $x$.

\vspace{-3mm}

\section{Problem formulation and motivation}
In this section, we formulate the distributed online convex optimization problem with time-varying inequality constraints, and then present the motivation.

Consider a repeated game with $T$ iterations over a network of $n$ agents. At iteration $t$, the agents indexed by $i \in [ n ]$ exchange information with their neighboring agents via an underlying communication topology, and then select decisions ${x_{i,t}} \in \mathbb{X}$ without knowing the local loss functions ${f_{i,t}}:\mathbb{X} \to \mathbb{R}$ and constraint functions ${g_{i,t}}:\mathbb{X} \to {\mathbb{R}^{{m_i}}}$, where $\mathbb{X} \subseteq {\mathbb{R}^p}$ is a known convex set to the agents. After that, the local loss functions ${f_{i,t}}$ and constraint functions ${g_{i,t}}$ are privately revealed. Accordingly, the agents suffer losses ${f_{i,t}}( {{x_{i,t}}} )$. The goal of the network is to minimize the average of the network-wide loss accumulated over $T$ iterations, i.e.,  $\frac{1}{n}\sum\nolimits_{i = 1}^n {\sum\nolimits_{t = 1}^T {{f_t}\left( {{x_{i,t}}} \right)} } $.
Note that at iteration~$t$,
\begin{flalign}
{f_t}( x ) &= \frac{1}{n}\sum\limits_{j = 1}^n {{f_{j,t}}( x )},  \label{pf-eq1} \\
{g_t}( x ) &= {\rm{col}}\big( {{g_{1,t}}( x ), \cdot  \cdot  \cdot ,{g_{n,t}}( x )} \big), \label{pf-eq2}
\end{flalign}
are the global loss and constraint functions of the network, respectively.

Let the set of the feasible decision sequences
\begin{flalign}
{\mathcal{X}_T} = \{ {( {{x_1}, \cdot  \cdot  \cdot ,{x_T}} ):{g_t}( {{x_t}} ) \le {\mathbf{0}_m},{x_t} \in \mathbb{X},\forall t \in [ T ]} \}, \label{pf-eq3}
\end{flalign}
and the set of the feasible static decision sequences
\begin{flalign}
{\mathcal{\hat{X}}_T} = \{ {( {x, \cdot  \cdot  \cdot ,x} ):{g_t}( x ) \le {\mathbf{0}_m},x \in \mathbb{X},\forall t \in [ T ]} \}, \label{pf-eq4}
\end{flalign}
are non-empty, where $m = \sum\nolimits_{j = 1}^n {{m_j}}$.

The communication topology among agents is described by a time-varying directed graph ${\mathcal{G}_t} = ( {\mathcal{V},{\mathcal{E}_t}} )$, where $\mathcal{V} = [ n ]$ is the set of agents and ${\mathcal{E}_t} \subseteq \mathcal{V} \times \mathcal{V}$ is the set of edges at iteration~$t$. A directed edge $( {j,i} ) \in {\mathcal{E}_t}$ implies that agent $i$ can receive information from agent $j$ at iteration~$t$. The sets of in- and out-neighbors of agent $i$ at iteration~$t$ are $\mathcal{N}_i^{\text{in}}( {{\mathcal{G}_t}} ) = \{ {j \in [ n ]|( {j,i} ) \in {\mathcal{E}_t}} \}$ and $\mathcal{N}_i^{\text{out}}( {{\mathcal{G}_t}} ) = \{ {j \in [ n ]|( {i,j} ) \in {\mathcal{E}_t}} \}$, respectively.
The associated weight mixing matrix ${W_t} \in {\mathbb{R}^{n \times n}}$ satisfies that ${[ {{W_t}} ]_{ij}} > 0$ if $( {j,i} ) \in {\mathcal{E}_t}$ or $i = j$, and ${[ {{W_t}} ]_{ij}} = 0$ otherwise.

In this paper, the following assumptions are made, which are commonly adopted in distributed optimization, see, e.g., \cite{Cao2021, Xiong2023, Paul2022, Oakamoto2023, Yi2023, Mohammadreza2024}, recent survey paper \cite{Li2023} and references therein.
\begin{assumption}
(i) The set $\mathbb{X}$ is convex and closed.
Moreover, it is bounded by a positive constant $R( \mathbb{X} )$, i.e., for any $x \in \mathbb{X}$
\begin{flalign}
\| x \| \le R( \mathbb{X}). \label{ass-eq1}
\end{flalign}
(ii) For all $i \in [n]$, $t \in {\mathbb{N}_ + }$, the local loss functions ${f_{i,t}}$ and constraint functions ${g_{i,t}}$ are convex, and there exists a positive constant ${F_1}$ such that
\begin{subequations}
\begin{flalign}
| {{f_{i,t}}( x ) - {f_{i,t}}( y )} | &\le {F_1}, \label{ass-eq2a} \\
\| {{g_{i,t}}( x )} \| &\le {F_1}, x, y \in \mathbb{X}. \label{ass-eq2b}
\end{flalign}
\end{subequations}
(iii) For all $i \in [n]$, $t \in {\mathbb{N}_ + }$, the subgradients $\partial {f_{i,t}}( x )$ and $\partial {g_{i,t}}( x )$ exist, and there exists a positive constant ${F_2}$ such that
\begin{subequations}
\begin{flalign}
\| {\partial {f_{i,t}}( x )} \| &\le {F_2}, \label{ass-eq3a}\\
\| {\partial {g_{i,t}}( x )} \| &\le {F_2}, x \in \mathbb{X}. \label{ass-eq3b}
\end{flalign}
\end{subequations}
\end{assumption}
\begin{assumption}
For $t \in {\mathbb{N}_ + }$, the time-varying directed graph $\mathcal{G}_t$ satisfies that

\noindent (i) There exists a constant $w  \in ( {0,1} )$, such that ${[ {{W_t}} ]_{ij}} \ge w$ if ${[ {{W_t}} ]_{ij}} > 0$.

\noindent (ii) The mixing matrix ${W_t}$ is doubly stochastic, i.e., ${\sum\nolimits_{i = 1}^n {[ {{W_t}} ]} _{ij}} = {\sum\nolimits_{j = 1}^n {[ {{W_t}} ]} _{ij}} = 1$, $\forall i,j \in [ n ]$.

\noindent (iii) There exists an integer $B > 0$ such that the time-varying directed graph $( {\mathcal{V}, \cup _{l = 0}^{B - 1}{\mathcal{E} _{t + l}}} )$ is strongly connected.
\end{assumption}

Assumption~1 implies that the local loss functions ${f_{i,t}}$ and constraint functions ${g_{i,t}}$ are convex and Lipschitz continuous on $\mathbb{X}$.
Assumption~2 ensures that the time-varying directed graph~$\mathcal{G}_t$ is
uniformly jointly strongly connected, which is considerably weaker than requiring $\mathcal{G}_t$ to be a strongly connected graph as it allows that there exists a path from one agent to every other agent within any bounded interval of length $B$.

The authors of \cite{Yi2023} consider the distributed online convex optimization problem with time-varying inequality constraints, and propose a distributed online primal--dual algorithm with full-information feedback, where the agents at each iteration need to share their decisions through a communication network. However, network resources are often limited. To reduce communication overhead, this paper integrates event-triggered communication into the algorithm.

\section{Distributed event-triggered online primal--dual algorithm}
In this section, we propose a distributed event-triggered online primal--dual algorithm. Moreover, we present the performance metrics to evaluate the algorithm.

\subsection{Algorithm description}
By integrating event-triggered communication into the algorithm with full-information feedback in \cite{Yi2023}, the distributed event-triggered online primal--dual algorithm is proposed from the perspective of each agent, which is presented in pseudo-code as Algorithm 1.
\begin{algorithm}[t]
  \caption{Distributed Event-Triggered Online Primal--Dual Algorithm} 
  \begin{algorithmic}
  \renewcommand{\algorithmicrequire}{\textbf{Input:}}
  \REQUIRE
    Decreasing and positive sequences $\{ {\alpha _t}\} $, $\{ {\beta _t}\} $, $\{ {\gamma _t}\} $, and non-increasing
 and positive sequence $\{ {\tau _t}\} $.
  \renewcommand{\algorithmicrequire}{\textbf{Initialize:}}
  \REQUIRE
     For $i \in [ n ]$, initialize ${x_{i,1}} \in \mathbb{X}$, ${{\hat x}_{i,1}} = {x_{i,1}}$ and ${q_{i,1}} = {\mathbf{0}_{{m_i}}}$,  and broadcast ${{\hat x}_{i,1}}$ to $\mathcal{N}_i^{\text{out}}( {{\mathcal{G}_1}} )$ and receive ${{\hat x}_{j,1}}$ from $j \in \mathcal{N}_i^{\text{in}}( {{\mathcal{G}_1}} )$.
    \FOR {$t = 1, \cdot  \cdot  \cdot, T-1 $}
    \FOR {$i = 1,\cdot  \cdot  \cdot,n$ in parallel}
    \STATE Observe $\partial {f_{i,t}}({x_{i,t}})$, $\partial {[{g_{i,t}}({x_{i,t}})]_ + }$, and ${{{[{g_{i,t}}({x_{i,t}})]}_ + }}$;
    \STATE Distributed consensus protocol: \par\nobreak\vspace{-10pt}
     \begin{small}
     \begin{flalign}
       {z_{i,t + 1}} &= \sum\limits_{j = 1}^n {{{[{W_t}]}_{ij}}{{\hat x}_{j,t}}}, \label{Algorithm1-eq1}
     \end{flalign}
     \end{small}%
    \STATE Primal--dual protocol: \par\nobreak\vspace{-10pt}
     \begin{small}
     \begin{flalign}
       {\omega _{i,t + 1}} &= \partial {f_{i,t}}({x_{i,t}}) + \partial {[{g_{i,t}}({x_{i,t}})]_ + }{q_{i,t}}, \label{Algorithm1-eq2}\\
       {x_{i,t + 1}} &= {\mathcal{P}_\mathbb{X}}({z_{i,t + 1}} - {\alpha _{t + 1}}{\omega _{i,t + 1}}), \label{Algorithm1-eq3}\\
       \nonumber
       {q_{i,t + 1}} &= \Big[ ( {1 - {\beta _{t + 1}}{\gamma _{t + 1}}} ){q_{i,t}} + {\gamma _{t + 1}}\Big( {{{[{g_{i,t}}(x_{i,t})]}_ + }} \\
       & \;\;\;\;{ + {{\big( {\partial {{[ {{g_{i,t}}({x_{i,t}})} ]}_ + }} \big)}^T}( {{x_{i,t + 1}} - {x_{i,t}}} )} \Big) \Big]_ +. \label{Algorithm1-eq4}
      \end{flalign}
      \end{small}%
     \STATE Event-triggering check:
     \IF {$\| {{x_{i,t + 1}} - {{\hat x}_{i,t}}} \| \ge {\tau _{t + 1}}$}
     \STATE Set ${{\hat x}_{i,t + 1}} = {x_{i,t + 1}}$, and broadcast ${{{\hat x}_{i,t + 1}}}$ to $\mathcal{N}_i^{\text{out}}( {{\mathcal{G}_{t+1}}} )$.
     \ELSE
     \STATE Set ${{\hat x}_{i,t + 1}} = {{\hat x}_{i,t}}$, and do not broadcast.
     \ENDIF
    \ENDFOR
    \ENDFOR
  \renewcommand{\algorithmicensure}{\textbf{Output:}}
  \ENSURE
      $\{ x_{i,t} \}$.
  \end{algorithmic}
\end{algorithm}

In Algorithm 1, for $t \in [ T ]$ with $t \ge 2$ and ${i \in [ n ]}$, by the distributed consensus protocol \eqref{Algorithm1-eq1}, agent $i$ computes ${z_{i,t}} \in \mathbb{X}$ via the time-varying directed graph $\mathcal{G}_t$. In addition, by the primal--dual protocol \eqref{Algorithm1-eq2}--\eqref{Algorithm1-eq4}, agent $i$ updates its local primal variable ${x_{i,t}} \in \mathbb{X}$ and dual variable ${q_{i,t}} \in \mathbb{R}_ + ^{{m_i}}$, where ${\omega _{i,t}}$ is the updating direction of the local primal variable, ${\alpha _t}$ and ${\beta _t}$ are the updating step-sizes of the local primal and dual variables, respectively, and ${\gamma _t}$ is the regularization parameter. The current decision of agent~$i$ is broadcasted only if norm of the difference between the decision and the last broadcasted decision is not less than the current event-triggering threshold ${{\tau _t}}$.

The following assumption is made for the event-triggering threshold.
\vspace{-3mm}
\begin{assumption}
The event-triggering threshold sequence $\{ {\tau _t} \ge 0 \}$ satisfies ${\tau _{t + 1}} \le {\tau _t}$ for all $t \ge 2$.
\end{assumption}


\subsection{Performance metrics}
We adopt network regret and cumulative constraint violation as performance metrics to evaluate Algorithm~1 as in \cite{Yi2023}, which are respectively defined as
\begin{flalign}
{\rm{Net}\mbox{-}\rm{Reg}}( {\{ {{x_{i,t}}} \},{y_{[ T ]}}} ) &:= \frac{1}{n}\sum\limits_{i = 1}^n {\sum\limits_{t = 1}^T {{f_t}( {{x_{i,t}}} )} }  - \sum\limits_{t = 1}^T {{f_t}( {{y_t}} )}, \label{regret-eq1}
\end{flalign}
\begin{flalign}
{\rm{Net}\mbox{-}\rm{CCV}}( {\{ {{x_{i,t}}} \}} ) &:= \frac{1}{n}\sum\limits_{i = 1}^n {\sum\limits_{t = 1}^T {\| {{{[ {{g_t}( {{x_{i,t}}} )} ]}_ + }} \|} }, \label{CCV-eq2}
\end{flalign}
where ${y_{[ T ]}} = ( {{y_1}, \cdot  \cdot  \cdot ,{y_T}} )$ is a benchmark.

Note that the network cumulative constraint violation \eqref{CCV-eq2} is appropriate in some safety-critical applications where constraints should not be checked across rounds  in cumulation.
Network cumulative constraint violation~\eqref{CCV-eq2} avoids the negligence of some constraint violations barbecue of the effect of some strictly feasible decisions at other iterations.
Therefore it is stricter than network constraint violation adopted in \cite{Yi2020, Li2020, Yi2021b} which takes the summation across rounds before the projection operation  $[  \cdot  ]_ + $.

Moreover, we consider dynamic and static network regret, i.e., ${\rm{Net}\mbox{-}\rm{Reg}}( {\{ {{x_{i,t}}} \},{\check{x}_{[ T ]}^ *}} )$ and ${\rm{Net}\mbox{-}\rm{Reg}}( {\{ {{x_{i,t}}} \},{\hat x_{[ T ]}^*}} )$. For dynamic network regret, the benchmark $\check{x}_{[ T ]}^ *  = ( {\check{x}_1^ * , \cdots,\check{x}_T^ * } )$ is the optimal decision sequence, where $\check{x}_t^ * \in \mathbb{X}$ is the minimizer of ${f_t}( x )$ subject to ${c_t}( x ) \le {\mathbf{0}_m}$.
For static network regret, the benchmark $\hat x_{[ T ]}^ *  = ( {\hat{x}^ * , \cdots,\hat{x}^ * } )$ is the optimal static decision sequence, where $\hat{x}^ * \in \mathbb{X}$ is the minimizer of $\sum\nolimits_{t = 1}^T {{f_t}( x )} $ subject to ${g_t}( x ) \le {\mathbf{0}_m}$ for $t \in [T]$.

\section{Performance analysis}
In this section, we establish dynamic network regret and network cumulative constraint violation bounds for Algorithm~1.

Firstly, inspired by \cite{Cao2021, Xiong2023, Yi2023}, we specially design the updating step-size sequences of the local primal and dual variables, and the regularization parameter sequence of Algorithm~1 in the following theorem.

\begin{theorem}\label{thm1}
Suppose Assumptions 1--3 hold. Let $\{ {{x_{i,t}}} \}$ be the sequences generated by Algorithm~1 with
\begin{flalign}
&{\alpha _t} = \sqrt {\frac{{{\Psi _t}}}{t}}, {\beta _t} = \frac{1}{{{t^\kappa }}}, {\gamma _t} = \frac{1}{{{t^{1 - \kappa }}}}, \forall t \in {\mathbb{N}_ + }, \label{theorem1-eq1}
\end{flalign}
where ${\Psi _t} = \sum\nolimits_{\KP{s} = 1}^t {{\tau _\KP{s}}} $, $\kappa  \in ( {0,1} )$ are constants. Then, for any $T \in {\mathbb{N}_ + }$ and any comparator sequence ${y_{[ T ]}} \in {\mathcal{X}_T}$,
\begin{flalign}
{{\rm{Net}\mbox{-}\rm{Reg}}( {\{ {{x_{i,t}}} \},{y_{[ T ]}}} )} &= \mathcal{O}( {T^\kappa } + \sqrt {{\Psi_T} T}  + \sqrt {{{\Psi ^{ - 1}_T}}T} {P_T} ), \label{theorem1-eq2}\\
{{\rm{Net} \mbox{-} \rm{CCV}}( {\{ {{x_{i,t}}} \}} )}
&= \mathcal{O}( {T^{1 - \kappa /2}} + \sqrt[4]{{{\Psi_T} {T^3}}} ), \label{theorem1-eq3}
\end{flalign}
where ${P_T} = \sum\nolimits_{t = 1}^{T - 1} {\| {{y_{t + 1}} - {y_t}} \|} $ is the path--length of the benchmark ${y_{[ T ]}}$.
\end{theorem}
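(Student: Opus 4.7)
The plan is to adapt the primal--dual analysis of \cite{Yi2023} to the event-triggered setting, where the principal new difficulty is that agents propagate the stale broadcast values $\hat{x}_{j,t}$ rather than the up-to-date iterates $x_{j,t}$. I would first observe that Assumption~3 together with the event-triggering rule implies $\|x_{i,t}-\hat{x}_{i,t}\|\le \tau_t$ for every $i,t$, which is the only residual error injected into the consensus protocol \eqref{Algorithm1-eq1}. Writing $\bar{x}_t=\tfrac{1}{n}\sum_i x_{i,t}$ and using Assumption~2 (doubly stochastic weights and $B$-bounded joint strong connectivity), a standard backward-product argument for time-varying directed graphs would yield a geometric consensus bound of the form
\begin{flalign*}
\sum_{i=1}^n\|x_{i,t}-\bar{x}_t\|\le C\sum_{s=1}^{t}\rho^{t-s}(\alpha_s\|\omega_{i,s}\|+\tau_s),
\end{flalign*}
for some $\rho\in(0,1)$. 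The $\tau_s$ term is the event-triggering contribution absent in \cite{Yi2023}, and the boundedness of $\|\omega_{i,s}\|$ follows from a preliminary bound on the dual iterates $\|q_{i,s}\|$ obtained from the projection in \eqref{Algorithm1-eq4} combined with Assumption~1(iii).

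Second, I would derive the regret-per-step inequality. By convexity of $f_{i,t}$ and $[g_{i,t}]_+$, together with the non-expansive projection in \eqref{Algorithm1-eq3}, one obtains a drift estimate of the standard primal--dual form
\begin{flalign*}
\|x_{i,t+1}-y_t\|^2 \le \|z_{i,t+1}-y_t\|^2 - 2\alpha_{t+1}\langle \omega_{i,t+1}, x_{i,t}-y_t\rangle + \alpha_{t+1}^2 \|\omega_{i,t+1}\|^2,
\end{flalign*}
and $\|z_{i,t+1}-y_t\|^2$ is controlled via the weight-matrix convexity plus the event-triggered perturbation $\tau_t$. Summing over $i$ and $t$, telescoping the quadratic terms, and absorbing the time variation of the benchmark yields a path-length contribution $\tfrac{R(\mathbb{X})}{\alpha_T}P_T$ from the shift $y_t\to y_{t+1}$ (this is the mechanism that converts the analysis of \cite{Yi2023} into dynamic regret, in the spirit of \cite{Hall2015}). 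A parallel drift analysis on $\|q_{i,t}\|^2$, combined with the structural inequality between $q_{i,t+1}$ and $[g_{i,t}(x_{i,t})]_+$ arising from \eqref{Algorithm1-eq4}, produces the cumulative constraint violation bound.

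Third, it remains to substitute the parameter schedule \eqref{theorem1-eq1}. The role of $\alpha_t=\sqrt{\Psi_t/t}$ is precisely to balance three competing terms: the $1/\alpha_T$ factor against the benchmark (giving $\sqrt{T/\Psi_T}P_T$), the $\alpha_T T$ factor coming from the squared subgradients (giving $\sqrt{\Psi_T T}$), and the consensus/event-triggering contribution $\sum_{t}\tau_t\le\Psi_T$ which is inherently dominated by $\sqrt{\Psi_T T}$. The exponent $\kappa$ then arises from the standard trade-off between $\sum \beta_t\|q_{i,t}\|^2$ and $\sum 1/\gamma_t$, yielding the $T^\kappa$ and $T^{1-\kappa/2}$ terms in \eqref{theorem1-eq2}--\eqref{theorem1-eq3}. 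I would finish by collecting all estimates and invoking monotonicity of $\Psi_t$ to obtain the stated $\mathcal{O}$-expressions.

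The main obstacle is the coupling between event-triggering and the directed time-varying mixing: the perturbation $\tau_t$ enters both the consensus error and, indirectly, the bound on $\|q_{i,t}\|$ through $\|\omega_{i,t}\|$, so one cannot simply quote the clean estimates of \cite{Yi2023}. The technical payoff comes from the non-increasing nature of $\{\tau_t\}$ (Assumption~3) together with the specific choice $\alpha_t=\sqrt{\Psi_t/t}$, which ensures that the event-triggered perturbation terms can be absorbed, up to constants, into the $\sqrt{\Psi_T T}$ term that already governs the primal regret.
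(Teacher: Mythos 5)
Your overall architecture does mirror the paper's (adapt the primal--dual analysis of \cite{Yi2023}, exploit $\|x_{i,t}-\hat x_{i,t}\|\le\tau_t$, telescope against the benchmark to extract the $R(\mathbb{X})P_T/\alpha_T$ term, then substitute the schedule \eqref{theorem1-eq1}), but there is a genuine gap in how you treat the consensus error. You bound $\sum_i\|x_{i,t}-\bar x_t\|$ by a geometric sum of $\alpha_s\|\omega_{i,s}\|+\tau_s$ and assert that $\|\omega_{i,s}\|$ is bounded because the dual iterates are bounded ``from the projection in \eqref{Algorithm1-eq4}''. That projection is onto the nonnegative orthant and does not control the norm; from the recursion $\|q_{i,t+1}\|\le(1-\beta_{t+1}\gamma_{t+1})\|q_{i,t}\|+\gamma_{t+1}C$ with $\beta_t\gamma_t=1/t$ and $\gamma_t=t^{\kappa-1}$ one only obtains $\|q_{i,t}\|=\mathcal{O}(t^{\kappa})$, not a uniform constant. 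Feeding $\|\omega_{i,s}\|=\mathcal{O}(s^{\kappa})$ into your consensus bound gives a regret contribution of order $\sum_{s\le T}\alpha_s s^{\kappa}\approx\sqrt{\Psi_T}\,T^{1/2+\kappa}$, which exceeds the claimed $\mathcal{O}(T^{\kappa}+\sqrt{\Psi_T T})$ by a factor $T^{\kappa}$ (for $\kappa=1/2$, $\tau_t=1/t$ it is of order $T\sqrt{\log T}$, i.e., not sublinear). The paper never routes the consensus error through $\|\omega\|$: Lemma~\ref{lem1} keeps the residuals $\hat\varepsilon^x_{i,t-1}=\hat x_{i,t}-z_{i,t}$, bounds them by $\|\varepsilon^x_{i,t-1}\|+\tau_t$, and then cancels the resulting $\|\varepsilon^x_{i,t}\|^2/\alpha_{t+1}$ pieces (Young's inequality) against the negative term $-\tfrac{1}{2\alpha_{t+1}}\|\varepsilon^x_{i,t}\|^2$ produced by the projection drift, leaving only $\mathcal{O}(\alpha_t)$ and $\mathcal{O}(\tau_t)$ remainders; the growth of $\|q_{i,t}\|$ is absorbed separately through the nonnegative coefficient $\tfrac{1}{\gamma_t}-\tfrac{1}{\gamma_{t+1}}+\beta_{t+1}$. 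Without this cancellation mechanism your sketch does not reach the rate in \eqref{theorem1-eq2}.

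A second, smaller omission: the term that actually forces the interplay between Assumption~3 and $\alpha_t=\sqrt{\Psi_t/t}$ is not $\sum_t\tau_t\le\Psi_T$ but $\sum_t\tau_{t+1}/\alpha_{t+1}$, which arises when $x_{i,t+1}$ is replaced by $\hat x_{i,t+1}$ inside the telescoped squared distance to the benchmark and is therefore amplified by $R(\mathbb{X})/\alpha_{t+1}$ (your control of $\|z_{i,t+1}-y_t\|^2$ generates the same term). The paper disposes of it by noting $\Psi_{t+1}\ge(t+1)\tau_{t+1}$ (monotonicity of $\tau$), hence $\tau_{t+1}/\alpha_{t+1}\le\sqrt{\tau_{t+1}}$, and then Cauchy--Schwarz gives $\sum_t\sqrt{\tau_t}\le\sqrt{T\Psi_T}$; your final paragraph asserts that such terms ``can be absorbed'' but never exhibits this inequality. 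Both points would need to be repaired for the stated bounds \eqref{theorem1-eq2}--\eqref{theorem1-eq3} to follow.
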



\begin{remark}\label{rem1}
Theorem~1 establishes the dynamic network regret bound \eqref{theorem1-eq2} and network cumulative constraint violation bound \eqref{theorem1-eq3} for Algorithm~1.
If the path--length of the benchmark grows sublinearly, and $\tau_t$ converges to zero, i.e., $\sum\nolimits_{k = 1}^t {{\tau _k}}$ grows sublinearly,
then these bounds are sublinear.
Moreover, note that $\sqrt {{\Psi_T} T}$ and $\sqrt {{{\Psi^{ - 1}_T}}T}$ are derived for ${\alpha _t}$ given by \eqref{theorem1-eq1}.
Due to the event-triggering threshold ${\tau _t}$, the bound~\eqref{theorem1-eq2} is different from the well-known best regret bound $\mathcal{O}( {\sqrt{T}})$ achieved by the centralized online algorithm in~\cite{Zinkevich2003}.
\end{remark}

\vspace{-3mm}
\begin{remark}\label{rem2}
The step-size ${\beta _t}$ of the dual variable and the regularization parameter ${\gamma _t}$ are specially designed to bound the term $\sum\nolimits_{t = 1}^T {(\frac{1}{{{\gamma _t}}} - \frac{1}{{{\gamma _{t + 1}}}} + {\beta _{t + 1}})}$ of \eqref{lemma4-eq1} and \eqref{lemma4-eq2} in Lemma~4.
\end{remark}

\vspace{-3mm}
\begin{remark}\label{rem3}
When $\kappa  = 1/2$, we have $\sqrt {{\Psi _T}T}  > {T^\kappa }$ and $\sqrt[4]{{{\Psi _T}{T^3}}} > {T^{1 - \kappa /2}}$. Therefore the bounds \eqref{theorem1-eq2} and~\eqref{theorem1-eq3} become $\mathcal{O}( \sqrt {{\Psi_T} T}  + \sqrt {{{\Psi^{ - 1}_T} }T} {P_T} )$ and $\mathcal{O}( \sqrt[4]{{{\Psi_T} {T^3}}} )$, respectively.
Note that it follows from the bounds \eqref{theorem1-eq2} and \eqref{theorem1-eq3} that there exists a trade-off between the dynamic network regret and network cumulative constraint violation, that is, as $\kappa$ increases, the dynamic network regret increases, while the network cumulative constraint violation decreases.
\end{remark}

\vspace{-3mm}
\begin{remark}\label{rem4}
The proof of Theorem~1 has substantial differences compared to the proof of Theorem~1 in \cite{Yi2023}.
More specifically, in our Algorithm~1, the agents broadcast the current local decisions only if the event-triggering condition is satisfied.
Therefore, the resulting decision sequence is different with Algorithm~1 without event-triggered communication in~\cite{Yi2023} although the updating rules are same.
This critical difference leads to challenges in theoretical proof because we need to reanalyse all results related to the local decisions, e.g., the disagreement among agents, the global loss and constraint, dynamic network regret and network cumulative constraint violation bounds.
To tackle this challenge, we derive the upper bounds for the difference between the last broadcasted local decisions and the current local decisions using the current event-triggering threshold. Consequently, the established dynamic network regret bound \eqref{theorem1-eq2} and network cumulative constraint violation bound \eqref{theorem1-eq3} are subject to event-triggering threshold.
\end{remark}

We then select the event-triggering threshold sequence produced by ${\tau _t} = 1/{t^\theta }$ in the following corollary, which is also adopted by the distributed online algorithms in \cite{Cao2021, Paul2022, Oakamoto2023, Xiong2023} since $1/{t^\theta }$ naturally satisfies two conditions: 1) it is non-increasing; 2) it converges to zero.
\begin{corollary}\label{cor1}
Under the same conditions as in Theorem 1 with ${\tau _t} = 1/{t^\theta }$ and $\theta  > 0$, for any $T \in {\mathbb{N}_ + }$, it holds that
\begin{flalign}
\nonumber
&{{\rm{Net}\mbox{-}\rm{Reg}}( {\{ {{x_{i,t}}} \},{y_{[ T ]}}} )} \\
&= \left\{ \begin{array}{l}
\mathcal{O}( {{T^{\max \{ {\kappa ,1 - \theta /2} \}}} + {T^{\theta /2}}{P_T}} ), \;\;\;\;\;\;\;\;\;\;\mathrm{if} \; 0 < \theta < 1, \\
\mathcal{O}\big( {{T^\kappa } + \sqrt {T\log ( T )}  + \sqrt {\frac{T}{{\log ( T )}}} {P_T}} \big), \mathrm{if} \; \theta = 1, \\
\mathcal{O}( {{T^{\max \{ {\kappa ,1/2} \}}} + {T^{1/2}}{P_T}} ), \;\;\;\;\;\;\;\;\;\;\;\;\;\;\mathrm{if} \; \theta > 1,
\end{array} \right. \label{corollary1-eq1} \\
\nonumber
&{{\rm{Net} \mbox{-} \rm{CCV}}( {\{ {{x_{i,t}}} \}} )} \\
&= \left\{ \begin{array}{l}
\mathcal{O}( {{T^{\max \{ {1 - \kappa /2,1 - \theta /4} \}}}} ), \;\;\;\;\;\;\;\;\;\;\;\;\;\;\;\;\;\;\;\;\mathrm{if} \; 0 < \theta < 1, \\
\mathcal{O}( {{T^{1 - \kappa /2}} + \sqrt[4]{{{T^3}\log ( T )}}} ), \;\;\;\;\;\;\;\;\;\;\;\;\;\;\mathrm{if} \; \theta = 1, \\
\mathcal{O}( {{T^{\max \{ {1 - \kappa /2,3/4} \}}}} ), \;\;\;\;\;\;\;\;\;\;\;\;\;\;\;\;\;\;\;\;\;\;\;\;\mathrm{if} \; \theta > 1.
\end{array} \right. \label{corollary1-eq2}
\end{flalign}
\end{corollary}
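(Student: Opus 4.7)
The plan is to derive Corollary~1 as a direct consequence of Theorem~1 by substituting the specific event-triggering threshold $\tau_t = 1/t^\theta$ into the bounds \eqref{theorem1-eq2} and \eqref{theorem1-eq3}. The only quantity that needs to be estimated carefully is $\Psi_T = \sum_{s=1}^{T} 1/s^\theta$; once $\Psi_T$ is bounded in each regime of $\theta$, the rest is arithmetic on the three terms $T^\kappa$, $\sqrt{\Psi_T T}$, $\sqrt{\Psi_T^{-1} T}\,P_T$, and $\sqrt[4]{\Psi_T T^3}$.

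First I would handle the sum $\Psi_T$ by standard integral estimates. For $0 < \theta < 1$, the comparison with $\int_1^T s^{-\theta}\,ds$ yields $\Psi_T = \Theta(T^{1-\theta})$. For $\theta = 1$, it yields $\Psi_T = \Theta(\log T)$. For $\theta > 1$, the series converges, so $\Psi_T = \Theta(1)$. In particular, $\Psi_T^{-1}$ is bounded below by $\Theta(T^{\theta-1})$, $\Theta(1/\log T)$, and $\Theta(1)$ respectively.

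Next I would substitute into the Theorem~1 bounds. In the case $0 < \theta < 1$, one gets $\sqrt{\Psi_T T} = \Theta(T^{1-\theta/2})$, $\sqrt{\Psi_T^{-1} T} = \Theta(T^{\theta/2})$, and $\sqrt[4]{\Psi_T T^3} = \Theta(T^{1-\theta/4})$, so the regret bound becomes $\mathcal{O}(T^\kappa + T^{1-\theta/2} + T^{\theta/2} P_T)$, which collapses to the form $\mathcal{O}(T^{\max\{\kappa,1-\theta/2\}} + T^{\theta/2} P_T)$, and the CCV bound becomes $\mathcal{O}(T^{\max\{1-\kappa/2, 1-\theta/4\}})$. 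For $\theta = 1$, one just keeps the $\log T$ factors, giving the second line of each display. For $\theta > 1$, $\Psi_T$ is a constant, so $\sqrt{\Psi_T T}$ and $\sqrt{\Psi_T^{-1} T}$ are both $\Theta(T^{1/2})$, and $\sqrt[4]{\Psi_T T^3} = \Theta(T^{3/4})$, which yields the third line.

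I do not expect any real obstacle here: the result is a plug-in computation since Theorem~1 does all the heavy lifting (the difficulty concentrated in Assumption~3 and the event-triggered analysis is already absorbed into $\Psi_T$). The only bookkeeping subtlety is to check that Assumption~3 is actually satisfied by $\tau_t = 1/t^\theta$, which is immediate because $1/t^\theta$ is non-increasing and positive for any $\theta > 0$; and to verify that Theorem~1 is applied with the same $\kappa \in (0,1)$ without any additional compatibility conditions between $\kappa$ and $\theta$. After these sanity checks, combining the three terms with the $\max$ notation to match the stated form of \eqref{corollary1-eq1} and \eqref{corollary1-eq2} completes the proof.
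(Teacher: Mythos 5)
Your proposal is correct and matches the paper's (implicit) treatment: Corollary~1 is obtained exactly by substituting $\tau_t = 1/t^{\theta}$ into the Theorem~1 bounds, estimating $\Psi_T=\sum_{s=1}^{T}s^{-\theta}$ as $\Theta(T^{1-\theta})$, $\Theta(\log T)$, or $\Theta(1)$ via the same integral comparison the paper uses in \eqref{theorem1-proof-eq1} and \eqref{theorem2-proof-eq1}--\eqref{theorem2-proof-eq2}, and simplifying the resulting exponents; your regime-by-regime arithmetic and the check of Assumption~3 are all in order (only note that for the regret upper bound what you actually use is the upper bound on $\Psi_T^{-1}$, which your two-sided $\Theta$ estimates already provide).
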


\begin{remark}\label{rem5}
The bounds \eqref{corollary1-eq1} and \eqref{corollary1-eq2} are sublinear if the path--length ${P_T}$ grows sublinearly. If $\theta  > 1$, the bound \eqref{corollary1-eq1} recovers the results achieved by the centralized online algorithm in \cite{Hall2015} and the distributed event-triggered online algorithm in \cite{Oakamoto2023}.
Moreover, our Algorithm~1 is able to handle (time-varying) inequality constraints, whereas the algorithms in \cite{Oakamoto2023, Hall2015} are limited to a ball set and a box set, respectively.
\end{remark}

Next, we consider the event-triggering threshold sequence produced by ${\tau _t} =1/{c^t}$ in the following corollary, which is also adopted in distributed optimization with event-triggered communication, see, e.g., \cite{Seyboth2013, Yang2016b, Ding2017, Ge2020, Yang2022}.
\begin{corollary}\label{cor2}
Under the same conditions as in Theorem 1 with ${\tau _t} = 1/{c^t}$ and $c  > 1$, for any $T \in {\mathbb{N}_ + }$, it holds that
\begin{flalign}
{{\rm{Net}\mbox{-}\rm{Reg}}( {\{ {{x_{i,t}}} \},{y_{[ T ]}}} )} &= \mathcal{O}( {{T^{\max \{ {\kappa ,1/2} \}}} + {T^{1/2}}{P_T}} ), \label{corollary2-eq1}\\
{{\rm{Net} \mbox{-} \rm{CCV}}( {\{ {{x_{i,t}}} \}} )} &= \mathcal{O}( {{T^{\max \{ {1 - \kappa /2,3/4} \}}}} ). \label{corollary2-eq2}
\end{flalign}
\end{corollary}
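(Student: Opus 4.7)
The plan is to obtain Corollary 2 as an immediate specialization of Theorem 1 by evaluating $\Psi_T = \sum_{s=1}^T \tau_s$ for the geometric event-triggering threshold sequence $\tau_t = 1/c^t$ with $c > 1$, and then substituting the resulting estimates into the bounds \eqref{theorem1-eq2} and \eqref{theorem1-eq3}.

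First, a closed-form geometric series identity yields
\[
\Psi_T \;=\; \sum_{s=1}^{T} \frac{1}{c^s} \;=\; \frac{1 - c^{-T}}{c - 1},
\]
which is sandwiched as $\frac{1}{c} \le \Psi_T \le \frac{1}{c-1}$ for every $T \in \mathbb{N}_+$. Hence both $\Psi_T$ and its reciprocal $\Psi_T^{-1}$ are $\Theta(1)$, with constants depending only on $c$. Before this step, I would also check that Assumption 3 holds for $\tau_t = 1/c^t$, which is immediate since $c > 1$ makes the sequence strictly decreasing and positive, so Theorem 1 is applicable.

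Substituting into the dynamic network regret bound \eqref{theorem1-eq2}, the two event-triggering-dependent terms collapse to $\sqrt{\Psi_T T} = \mathcal{O}(T^{1/2})$ and $\sqrt{\Psi_T^{-1} T}\, P_T = \mathcal{O}(T^{1/2} P_T)$. Combining with the $\mathcal{O}(T^\kappa)$ term and merging $T^\kappa$ with $T^{1/2}$ into $T^{\max\{\kappa,1/2\}}$ yields \eqref{corollary2-eq1}. An analogous substitution into \eqref{theorem1-eq3} reduces $\sqrt[4]{\Psi_T T^3}$ to $\mathcal{O}(T^{3/4})$, and combining with $T^{1-\kappa/2}$ produces \eqref{corollary2-eq2}.

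Because Corollary 2 follows from Theorem 1 via a standard geometric series estimate, no real obstacle arises. The only point requiring minor care is to verify that the constants in the two-sided bound on $\Psi_T$ depend only on $c$ and not on $T$, so that both $\sqrt{\Psi_T T}$ and $\sqrt{\Psi_T^{-1} T}$ are genuinely $\Theta(\sqrt{T})$ without hidden logarithmic factors, in contrast to the boundary case $\theta = 1$ of Corollary 1 where $\Psi_T \sim \log(T)$.
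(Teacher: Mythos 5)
Your proposal is correct and is essentially the argument the paper intends (the corollary is a direct specialization of Theorem~1): for $\tau_t = 1/c^t$ with $c>1$ the sum $\Psi_T$ is sandwiched between $1/c$ and $1/(c-1)$, so $\Psi_T$ and $\Psi_T^{-1}$ are $\Theta(1)$, and substituting into \eqref{theorem1-eq2} and \eqref{theorem1-eq3} gives exactly \eqref{corollary2-eq1} and \eqref{corollary2-eq2}. Your additional checks (Assumption~3 holds for this sequence, and no hidden $\log T$ factor unlike the $\theta=1$ case of Corollary~1) are accurate and complete the argument.
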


\begin{remark}\label{rem6}
The bounds \eqref{corollary2-eq1} and \eqref{corollary2-eq2} recover the results achieved in Corollary 1 with $\theta  > 1$. Moreover, the bound~\eqref{corollary2-eq1} recovers the results achieved by the centralized online algorithm in \cite{Hall2015} and the distributed event-triggered online algorithm in \cite{Oakamoto2023}.
\end{remark}

Note that in (14), the event-triggering threshold also affects the updating step-size $\alpha_t$ of the local primal variable in addition to communication between the agents. To avoid that, we appropriately design new parameter sequences for Algorithm~1 in the following theorem.
\begin{theorem}\label{thm2}
Suppose Assumptions 1--3 hold. Let $\{ {{x_{i,t}}} \}$ be the sequences generated by Algorithm~1 with
\begin{flalign}
&{\alpha _t} = \frac{{{\alpha _0}}}{{{t^{{\theta _1}}}}}, {\beta _t} = \frac{1}{{{t^{{\theta _2}}}}}, {\gamma _t} = \frac{1}{{{t^{1 - {\theta _2}}}}}, {\tau _t} = \frac{{{\tau _0}}}{{{t^{{\theta _3}}}}}, \forall t \in {\mathbb{N}_ + }, \label{theorem2-eq1}
\end{flalign}
where ${\theta _1} \in ( {0,1} )$, ${\theta _2}  \in ( {0,1} )$, ${\alpha _0}$, ${\tau _0}$ and $\theta _3$ are positive constants. Then, for any $T \in {\mathbb{N}_ + }$ and any comparator sequence ${y_{[ T ]}} \in {\mathcal{X}_T}$,
\begin{flalign}
\nonumber
&{{\rm{Net}\mbox{-}\rm{Reg}}( {\{ {{x_{i,t}}} \},{y_{[ T ]}}} )} \\
&= \left\{ \begin{array}{l}
\mathcal{O}\big( {\alpha _0}{T^{1 - {\theta _1}}} + {T^{{\theta _2}}} + \frac{{{\tau _0}}}{{{\alpha _0}}}{T^{1 + {\theta _1} - {\theta _3}}}\\
\;\;\;\;\;\;\;\;\;\;\;\;\;\;\;\;\;\;\; + \frac{{{T^{{\theta _1}}}( {1 + {P_T}} )}}{{{\alpha _0}}} \big), \;\;\;\;\mathrm{if} \; {\theta _1} < {\theta _3} < 1 + {\theta _1},\\
\mathcal{O}\big( {\alpha _0}{T^{1 - {\theta _1}}} + {T^{{\theta _2}}} + \frac{{{\tau _0}}}{{{\alpha _0}}}\log ( T ) \\
\;\;\;\;\;\;\;\;\;\;\;\;\;\;\;\;\;\;\; + \frac{{{T^{{\theta _1}}}( {1 + {P_T}} )}}{{{\alpha _0}}} \big),\;\;\;\;\mathrm{if} \; {\theta _3} = 1 + {\theta _1},\\
\mathcal{O}\big( {\alpha _0}{T^{1 - {\theta _1}}} + {T^{{\theta _2}}} + \frac{{{\tau _0}}}{{{\alpha _0}}} \\
\;\;\;\;\;\;\;\;\;\;\;\;\;\;\;\;\;\;\; + \frac{{{T^{{\theta _1}}}( {1 + {P_T}} )}}{{{\alpha _0}}} \big),\;\;\;\;\mathrm{if} \; {\theta _3} > 1 + {\theta _1},
\end{array} \right. \label{theorem2-eq2} \\
\nonumber
&{{\rm{Net} \mbox{-} \rm{CCV}}( {\{ {{x_{i,t}}} \}} )} \\
&=\left\{ \begin{array}{l}
\mathcal{O}( \sqrt {{\alpha _0}} {T^{1 - {\theta _1}/2}} + {T^{1 - {\theta _2}/2}}\\
\;\;\;\;\;\;\;\;\;\;\;\;\;\;\;\;\;\;\; + \sqrt {{\tau _0}} {T^{1 - {\theta _3}/2}} ), \;\; \mathrm{if} \; {\theta _1} < {\theta _3} < 1,\\
\mathcal{O}\big( \sqrt {{\alpha _0}} {T^{1 - {\theta _1}/2}} + {T^{1 - {\theta _2}/2}}\\
\;\;\;\;\;\;\;\;\;\;\;\;\;\;\;\;\;\;\; + \sqrt {{\tau _0}T\log ( T )} \big),\mathrm{if} \; {\theta _3} = 1,\\
\mathcal{O}( \sqrt {{\alpha _0}} {T^{1 - {\theta _1}/2}} + {T^{1 - {\theta _2}/2}} \\
\;\;\;\;\;\;\;\;\;\;\;\;\;\;\;\;\;\;\; + \sqrt {{\tau _0}} {T^{1/2}} ), \;\;\;\;\;\;\; \mathrm{if} \; {\theta _3} > 1.
\end{array} \right. \label{theorem2-eq3}
\end{flalign}
\end{theorem}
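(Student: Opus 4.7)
The plan is to replay the argument underlying Theorem~1, exploiting the fact that in~\eqref{theorem2-eq1} the primal step-size $\alpha_t$ has been chosen independently of the event-triggering sequence $\tau_t$. Lemmas~1--4 of the paper, which formed the backbone of that argument, furnish (for any admissible parameter sequences) a pair of ``raw'' upper bounds in which the triggering-dependent contributions appear as $\sum_{t=1}^T \tau_t/\alpha_t$ in the regret and as $\sum_{t=1}^T \tau_t$ (under a $\sqrt{T(\cdot)}$ dressing) in the CCV, while the remaining contributions depend on $\alpha_t$, $\gamma_t$, $\beta_t$ and $P_T$ exactly as in the proof of Theorem~1. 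There the raw bounds were minimized over $\alpha_t$ by setting $\alpha_t = \sqrt{\Psi_t/t}$, which coupled the primal step-size to $\tau_t$; here I would simply leave $\alpha_t = \alpha_0/t^{\theta_1}$ as prescribed.

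Substituting \eqref{theorem2-eq1} then reduces the proof to a bookkeeping exercise in power sums. First, $\sum_{t=1}^T \alpha_t = O(\alpha_0 T^{1-\theta_1})$ and $1/\alpha_T = O(T^{\theta_1}/\alpha_0)$, which produce the $\alpha_0 T^{1-\theta_1}$ and $T^{\theta_1}(1+P_T)/\alpha_0$ contributions in \eqref{theorem2-eq2} and the $\sqrt{\alpha_0}\,T^{1-\theta_1/2}$ contribution in \eqref{theorem2-eq3}. Second, the dual-regularization contributions amount to $O(T^{\theta_2})$ in the regret (via $\sum_{t=1}^T \gamma_t = O(T^{\theta_2})$) and to $O(T^{1-\theta_2/2})$ in the CCV (via $1/\gamma_T = O(T^{1-\theta_2})$ combined with the $\sqrt{T(\cdot)}$ dressing). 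Third, the regret triggering contribution is
\[
\sum_{t=1}^T \frac{\tau_t}{\alpha_t} = \frac{\tau_0}{\alpha_0}\sum_{t=1}^T t^{\theta_1-\theta_3},
\]
which is $O(T^{1+\theta_1-\theta_3})$, $O(\log T)$ or $O(1)$ according to whether $\theta_3 < 1+\theta_1$, $\theta_3 = 1+\theta_1$ or $\theta_3 > 1+\theta_1$ --- precisely the three branches of \eqref{theorem2-eq2}. Fourth, the CCV triggering contribution reduces to $\sqrt{T\sum_{t=1}^T \tau_t}$, and since $\sum_{t=1}^T \tau_t$ is of order $T^{1-\theta_3}$, $\log T$ or $O(1)$ depending on whether $\theta_3$ is below, at or above $1$, this reproduces the three branches of \eqref{theorem2-eq3}.

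The main technical obstacle is obtaining the raw bounds in the stated $\tau$--$\alpha$ decoupled form. In the proof of Theorem~1 the consensus and disagreement errors were eventually bundled into a single $\sqrt{\Psi_t/t}$ factor that implicitly presupposed the step-size choice. To sidestep this, I would bound the triggering gap $\|x_{i,t}-\hat{x}_{i,t}\|\le \tau_t$ pointwise (guaranteed by the event-triggering check in Algorithm~1 together with Assumption~3), propagate this gap through the consensus iteration \eqref{Algorithm1-eq1} using the exponential mixing estimate provided by Assumption~2, and only then divide by $\alpha_t$ at the single point where the consensus error enters an $\alpha_t$-weighted subgradient step. This isolates the $\tau$--$\alpha$ interaction into exactly one term of each raw bound. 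Once this decoupling is complete, the remainder is routine $p$-series arithmetic with the natural integrability thresholds $\theta_3 = 1+\theta_1$ for the regret and $\theta_3 = 1$ for the CCV.
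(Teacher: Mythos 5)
Your proposal is correct and follows essentially the same route as the paper: the paper's Lemma~4 already gives the regret and CCV bounds in the $\tau$--$\alpha$ decoupled form (with terms $\sum_t\gamma_t$, $\sum_t\alpha_t$, $\sum_t\tau_t$, $\sum_t\tau_{t+1}/\alpha_{t+1}$, $1/\alpha_{T+1}$, $P_T/\alpha_T$) for arbitrary admissible parameter sequences, and the proof of Theorem~2 is exactly the power-sum bookkeeping you describe, with the thresholds $\theta_3=1+\theta_1$ for the regret and $\theta_3=1$ for the CCV. The ``technical obstacle'' you anticipate is not actually present, since the $\sqrt{\Psi_t/t}$ coupling enters only when Theorem~1 specializes $\alpha_t$, and your proposed remedy (pointwise $\|x_{i,t}-\hat x_{i,t}\|\le\tau_t$, propagation through the mixing estimate, and a single division by $\alpha_{t+1}$) is precisely what the paper's Lemmas~1, 3 and~4 already do.
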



\begin{remark}\label{rem7}
Theorem~2 establishes dynamic network regret bound \eqref{theorem2-eq2} and network cumulative constraint violation bound \eqref{theorem2-eq3} for Algorithm~1.
If the path--length of the benchmark grows sublinearly, then these bounds are sublinear.
Moreover, the bounds \eqref{theorem2-eq2} and \eqref{theorem2-eq3} show the impact of event-triggering threshold on dynamic network regret and network cumulative constraint violation, that is, the larger ${\tau _0}$ is, the larger the bounds are.
When ${\tau _0} = 0$, i.e., without event-triggered communication, the bounds are the same as the results achieved by distributed online algorithms in \cite{Yi2023} when choosing ${\theta _1} = {\theta _2}$.
\end{remark}

\vspace{-3mm}
\begin{remark}\label{rem8}
Note that to achieve dynamic network regret and network cumulative constraint violation, ${\tau _t}$ cannot be selected as a fixed positive constant in Theorems 1 and 2 due to the conditions that event-triggering threshold $\tau_t$ converges to zero in Theorem 1 and ${\theta _3} > {\theta _1}$ in Theorem 2, respectively.
\end{remark}

\vspace{-3mm}
\begin{remark}\label{rem9}
It should be pointed out that the smallest network cumulative constraint violation bounds in Corollaries~1 and~2 are $\mathcal{O}( {{T^{3/4}}} )$, which are reduced to $\mathcal{O}( {{T^{1/2}}} )$ in Theorem~2. Moreover, the smallest dynamic network regret bounds in Corollaries~1 and~2 are $\mathcal{O}( {{T^{1/2}}} )$, which are recovered in Theorem~2. In addition, the bounds \eqref{theorem2-eq2} and \eqref{theorem2-eq3} recover the results achieved by the centralized online algorithms in \cite{Hall2015}, the distributed online algorithms in \cite{Yi2023}, and the distributed event-triggered online algorithm in \cite{Oakamoto2023}.
\end{remark}

\vspace{-3mm}
\begin{remark}\label{rem10}
By replacing the any benchmark ${y_{[ T ]}}$ with the optimal static decision sequence $\hat x_{[ T ]}^*$, we have ${P_T} \equiv 0$, and then the static network regret and cumulative constraint violation bounds for Algorithm~1 with corresponding parameter sequences can be easily established based on the results in Theorems~1 and~2, and Corollaries~1 and~2, respectively, which are the same as \eqref{theorem1-eq2}--\eqref{corollary2-eq2}, \eqref{theorem2-eq2}, and \eqref{theorem2-eq3} with ${P_T} \equiv 0$, respectively. The bounds recover the results achieved by the centralized online algorithm in \cite{Zinkevich2003}, the distributed online algorithms in \cite{Yuan2022, Yi2023}, and the distributed event-triggered online algorithms in \cite{Cao2021, Paul2022, Xiong2023}.
\end{remark}

\vspace{-4mm}
\section{Numerical example}
Consider a distributed online linear regression problem with time-varying linear inequality constraints over a network of $n$ agents in \cite{Yi2023}. At each iteration $t$, agent $i$ for $i \in [ n ]$ accesses to the local loss and constraint functions, i.e., ${f_{i,t}}( x ) = \frac{1}{2}\| {{A_{i,t}}x - {\vartheta _{i,t}}\|^2}$ and ${g_{i,t}}( x ) = {B_{i,t}}x - {b_{i,t}}$, where each entry of ${A_{i,t}} \in {\mathbb{R}^{{q_i} \times p}}$ is randomly generated from the uniform distribution in the interval $[ { - 1,1} ]$, ${\vartheta _{i,t}} = {A_{i,t}}{\mathbf{1}_p} + {\zeta _{i,t}}$, where ${\zeta _{i,t}}$ is a standard normal random vector, and each entry of ${B_{i,t}} \in {\mathbb{R}^{{m_i} \times p}}$ and ${b_{i,t}} \in {\mathbb{R}^{{m_i}}}$ is randomly generated from the uniform distribution in the interval $[ {0,2} ]$ and $[ {0,1} ]$, respectively.
We set $n = 100$, ${q_i} = 4$, $p = 10$, ${m_i} = 2$, $\mathbb{X} = {[ { - 5,5} ]^p}$.
At each iteration $t$, we use an undirected random graph to model the underlying communication topology.
Specifically, connections between the agents are random and the probability of two agents being connected is $0.1$. To make sure that Assumption~2 is satisfied, we add edges~$( {i,i + 1} )$ for $i \in [ {n - 1} ]$, and let
${[ {{W_t}} ]_{ij}} = \frac{1}{n}$ if $( {j,i} ) \in {\mathcal{E}_t}$ and ${[ {{W_t}} ]_{ii}} = 1 - \sum\nolimits_{j = 1}^n {{{[ {{W_t}} ]}_{ij}}} $.

\begin{figure}[!ht]
 \centering
  \includegraphics[width=6.5cm]{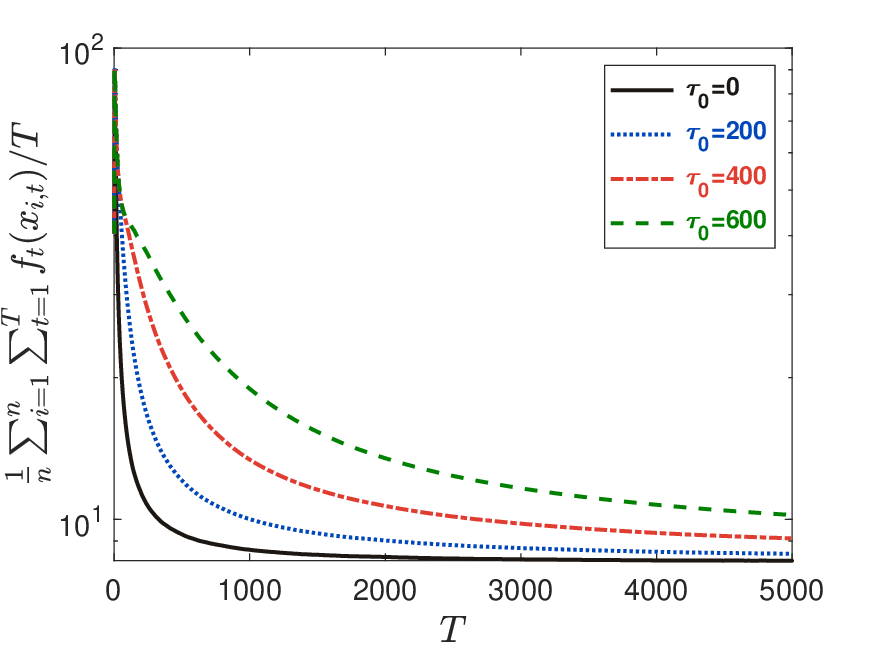}
  \caption{Evolutions of $\frac{1}{n}\sum\nolimits_{i = 1}^n {\sum\nolimits_{t = 1}^T {{f_t}( {{x_{i,t}}} )} } /T$.}
\end{figure}

\begin{figure}[!ht]
 \centering
  \includegraphics[width=6.5cm]{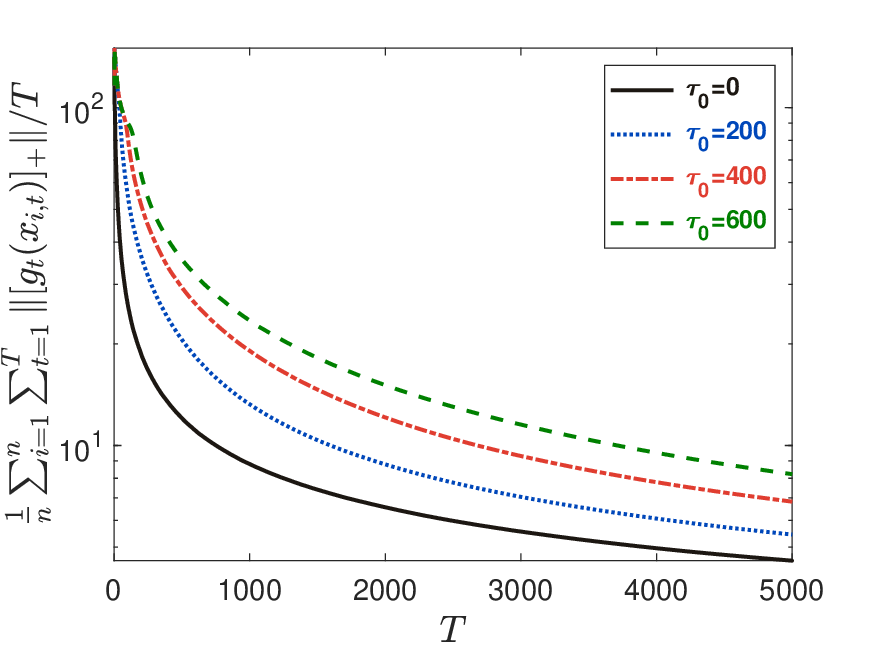}
  \caption{Evolutions of $\frac{1}{n}\sum\nolimits_{i = 1}^n {\sum\nolimits_{t = 1}^T {\| {{{[ {{g_t}( {{x_{i,t}}} )} ]}_ + }} \|} } /T$.}
\end{figure}

\begin{figure}[!ht]
 \centering
  \includegraphics[width=6.5cm]{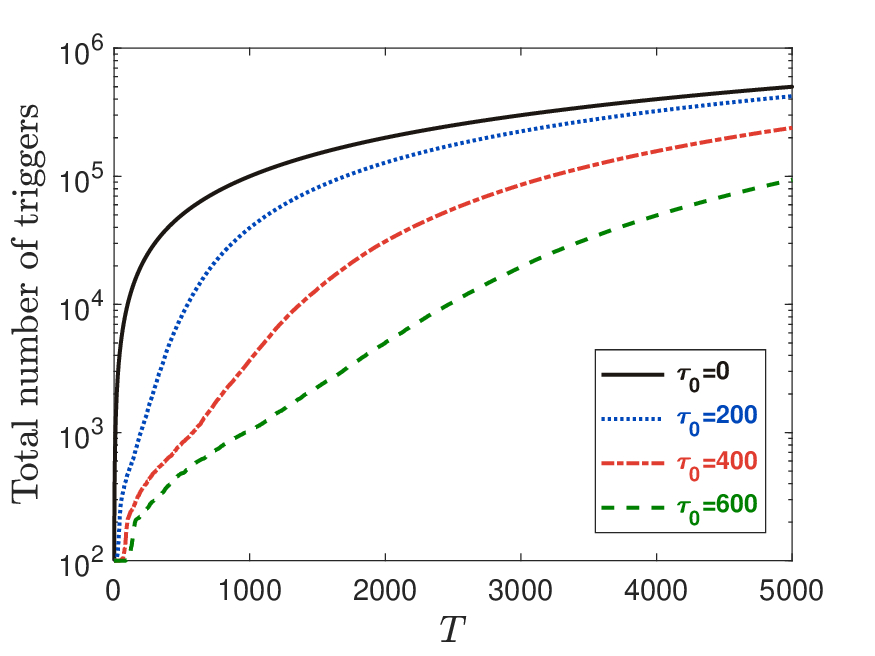}
  \caption{Evolutions of total number of triggers.}
\end{figure}
Set ${\alpha _t} = 1/{t^{1/2}}$, ${\beta _t} = 1/{t^{1/2}}$, ${\gamma _t} = 1/{t^{1/2}}$ and ${\tau _t} = {\tau _0}/t$ for Algorithm~1. To explore the impact of different event-triggering threshold sequences on network regret and cumulative constraint violation, we select ${\tau _0} = 0$, ${\tau _0} = 200$, ${\tau _0} = 400$, and ${\tau _0} = 600$, respectively.
With different values of ${\tau _0}$, Figs. 1--3 illustrate the evolutions of the average cumulative loss $\frac{1}{n}\sum\nolimits_{i = 1}^n {\sum\nolimits_{t = 1}^T {{f_t}( {{x_{i,t}}} )} } /T$, the average cumulative constraint violation $\frac{1}{n}\sum\nolimits_{i = 1}^n {\sum\nolimits_{t = 1}^T {\| {{{[ {{g_t}( {{x_{i,t}}} )} ]}_ + }} \|} } /T$, and total number of triggers, respectively.
The results show that as ${\tau _0}$ increases, the average cumulative loss and the average cumulative constraint violation increase, and the total number of triggers decreases, which are consistent with the theoretical results in Theorem~2.

\vspace{-4mm}
\section{Conclusions}
This paper considered the distributed online convex optimization problem with time-varying inequality constraints. We proposed the distributed event-triggered online primal--dual algorithm to reduce communication overhead for a time-varying directed graph.
We analyzed the network regret and cumulative constraint violation for the proposed algorithm.
Our theoretical results are comparable to the results achieved by the related centralized and distributed online algorithms and distributed event-triggered online algorithms when the event-triggering threshold is properly chosen.
In the future, we will investigate the distributed event-triggered online primal--dual algorithm with bandit feedback as gradient information is unavailable in many real-world applications. Moreover, we will consider to eliminate the need of doubly stochastic in the future.

\vspace{-4mm}

\section*{Acknowledgement}
This work was supported by the National Key Research and Development Program of China under Grant 2022YFB3305904, the National Natural
Science Foundation of China under Grants 62133003 \& 61991403.



\appendix

\section{Preliminary Lemmas}
To prove Theorems~1 and~2, some preliminary results are derived in this section.

We follow the proofs in \cite{Yi2023}, but must take care to consider the event-triggering threshold ${\tau _t}$, which affects the decision sequence induced by Algorithm~1 such that we need to reanalyse all results related to the local decisions, e.g., the disagreement among agents, the global loss and constraint, dynamic network regret and network cumulative constraint violation bounds.

Firstly, we quantify the disagreement among agents.
\begin{lemma}\label{lem1}
If Assumption 2 holds. For all $i \in [ n ]$ and $t \in {\mathbb{N}_ + }$, ${{\hat x}_{i,t}}$ generated by Algorithm 1 satisfy
\begin{flalign}
\nonumber
\| {{{\hat x}_{i,t}} - {{\bar x}_t}} \| &\le \tau {\lambda ^{t - 2}}\sum\limits_{j = 1}^n {\| {{{\hat x}_{j,1}}} \|}  + \frac{1}{n}\sum\limits_{j = 1}^n {\| {\hat{\varepsilon} _{j,t - 1}^x} \|}  + \| {\hat{\varepsilon} _{i,t - 1}^x} \| \\
& \;\;+ \tau \sum\limits_{s = 1}^{t - 2} {{\lambda ^{t - s - 2}}} \sum\limits_{j = 1}^n {\| {\hat{\varepsilon} _{j,s}^x} \|}, \label{lemma1-eq1}
\end{flalign}
where ${{\bar x}_t} = \frac{1}{n}\sum\nolimits_{j = 1}^n {{{\hat x}_{j,t}}} $ and $\hat{\varepsilon} _{i,t - 1}^x = {{\hat x}_{i,t}} - {z_{i,t}}$.
\end{lemma}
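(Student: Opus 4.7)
The plan is to recognize that Algorithm~1 induces a linear perturbed consensus recursion on the broadcast variables $\{\hat x_{i,t}\}$, and to apply the standard geometric ergodicity of products of doubly stochastic matrices over uniformly jointly strongly connected graphs. Define $\hat\varepsilon_{i,s}^x := \hat x_{i,s+1} - z_{i,s+1}$ (so that regardless of whether the event triggers, this quantity is well-defined and captures the event-triggered deviation). Then \eqref{Algorithm1-eq1} directly gives
\begin{flalign*}
\hat x_{i,t+1} = z_{i,t+1} + \hat\varepsilon_{i,t}^x = \sum_{j=1}^n [W_t]_{ij}\hat x_{j,t} + \hat\varepsilon_{i,t}^x,
\end{flalign*}
which is a standard perturbed consensus dynamics with additive driving term $\hat\varepsilon_{i,t}^x$.

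First I would compute the evolution of the average. Since $W_t$ is doubly stochastic by Assumption~2(ii), summing over $i$ and dividing by $n$ yields $\bar x_{t+1} = \bar x_t + \tfrac{1}{n}\sum_{i=1}^n \hat\varepsilon_{i,t}^x$. Next, define the transition matrix $\Phi(t,s) := W_t W_{t-1}\cdots W_s$ for $s\le t$ and $\Phi(t,t+1) := I$, and unroll the recursion to get
\begin{flalign*}
\hat x_{i,t+1} = \sum_{j=1}^n [\Phi(t,1)]_{ij}\hat x_{j,1} + \sum_{s=1}^{t-1}\sum_{j=1}^n [\Phi(t,s+1)]_{ij}\hat\varepsilon_{j,s}^x + \hat\varepsilon_{i,t}^x.
\end{flalign*}
Unrolling the averaged recursion similarly, subtracting the two, and taking norms produces three groups of terms: the initial-condition term, the accumulated perturbation term with transition weights $[\Phi(t,s+1)]_{ij} - 1/n$, and the endpoint term $\hat\varepsilon_{i,t}^x - \tfrac{1}{n}\sum_j \hat\varepsilon_{j,t}^x$, which I bound by $\|\hat\varepsilon_{i,t}^x\| + \tfrac{1}{n}\sum_j \|\hat\varepsilon_{j,t}^x\|$ via triangle inequality.

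Second, I invoke the standard geometric ergodicity bound for products of doubly stochastic matrices over a $B$-uniformly jointly strongly connected graph whose positive entries are bounded below by $w$ (Assumption~2): there exist constants $\tau > 0$ and $\lambda \in (0,1)$ such that
\begin{flalign*}
\Big|[\Phi(t,s)]_{ij} - \tfrac{1}{n}\Big| \le \tau \lambda^{t-s},\quad \forall i,j\in[n],\; t\ge s.
\end{flalign*}
Applying this bound to each weight in the two sums above and finally reindexing $t \mapsto t-1$ yields exactly \eqref{lemma1-eq1}, where the coefficients $\tau\lambda^{t-2}$ on the initial term and $\tau\lambda^{t-s-2}$ on the perturbations reflect the total horizon $t-1-1 = t-2$ and $t-1-(s+1) = t-s-2$, respectively.

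The main obstacle is not the algebra, which is a routine unrolling and triangle inequality, but invoking the geometric convergence bound on $\Phi(t,s)$; this is a classical result from consensus theory under Assumption~2 and is used without proof in the analogous lemma of~\cite{Yi2023}. The one genuinely new care-point is that because of event triggering the driving sequence $\hat\varepsilon_{i,s}^x$ can be nonzero even for agents that do not trigger at step $s$ (since $z_{i,s+1}$ is formed from neighbors' latest broadcasts, not their current states); keeping the argument state on $\{\hat x_{i,t}\}$ rather than $\{x_{i,t}\}$ sidesteps this issue cleanly.
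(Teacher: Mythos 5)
Your proof is correct and is essentially the argument the paper relies on: the paper simply cites Lemma 4 of \cite{Yi2023}, whose proof is exactly this perturbed-consensus unrolling with the transition matrices $\Phi(t,s)$ and the geometric mixing bound $|[\Phi(t,s)]_{ij}-1/n|\le\tau\lambda^{t-s}$ under Assumption~2, applied to the recursion $\hat x_{i,t+1}=\sum_j[W_t]_{ij}\hat x_{j,t}+\hat\varepsilon_{i,t}^x$. Your observation that stating the dynamics on the broadcast variables $\{\hat x_{i,t}\}$ makes the event-triggered case identical in form to the non-triggered one is precisely why the cited proof carries over unchanged.
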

\begin{proof}
The proof is presented in Lemma 4 of \cite{Yi2023}.
\end{proof}

We then give a result on the evolution of local dual variables.
\begin{lemma}\label{lem2}
Suppose Assumptions 1 and 2 hold, and ${\gamma _t}{\beta _t} \le 1$, $t \in {\mathbb{N}_ + }$. For all $i \in [ n ]$ and $t \in {\mathbb{N}_ + }$, the sequences ${q_{i,t}}$ generated by Algorithm 1 satisfy
\begin{flalign}
\nonumber
{\Delta _{i,t}}( {{\mu _i}} ) &\le {\varpi _1}{\gamma _t} + q_{i,t - 1}^T{b_{i,t}} - \mu _i^T{[ {{g_{i,t - 1}}( {{x_{i,t - 1}}} )} ]_ + }  \\
 & \;\;+ \frac{1}{2}{\beta _t}{\| {{\mu _i}} \|^2} + {F_2}\| {{\mu _i}} \|\| {{x_{i,t}} - {x_{i,t - 1}}} \|, \label{lemma2-eq1}
\end{flalign}
where ${\Delta _{i,t}}( {{\mu _i}} ) = \frac{1}{{2{\gamma _t}}}\big( {{{\| {{q_{i,t}} - {\mu _i}} \|}^2} - ( {1 - {\beta _t}{\gamma _t}} ){{\| {{q_{i,t - 1}} - {\mu _i}} \|}^2}} \big)$ with ${{\mu _i}}$ being an arbitrary vector in $\mathbb{R}_ + ^{{m_i}}$, ${\varpi _1} = 2{( {{F_1} + {F_2}R( \mathbb{X} )} )^2}$, and ${b_{i,t}} = {[ {{g_{i,t - 1}}( {{x_{i,t - 1}}} )} ]_ + } + {\big( {\partial {{[ {{g_{i,t - 1}}( {{x_{i,t - 1}}} )} ]}_ + }} \big)^T}( {{x_{i,t}} - {x_{i,t - 1}}} )$.
\end{lemma}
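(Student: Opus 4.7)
\emph{Proof plan.} The dual update, re-indexed from \eqref{Algorithm1-eq4}, reads
\begin{flalign*}
q_{i,t} = \bigl[(1-\beta_t\gamma_t)q_{i,t-1}+\gamma_t b_{i,t}\bigr]_+,
\end{flalign*}
so the natural starting point is the non-expansiveness of the projection onto $\mathbb{R}_+^{m_i}$. Since $\mu_i\in\mathbb{R}_+^{m_i}$ satisfies $\mu_i=[\mu_i]_+$, I would first write
\begin{flalign*}
\|q_{i,t}-\mu_i\|^2 \le \|(1-\beta_t\gamma_t)q_{i,t-1}+\gamma_t b_{i,t}-\mu_i\|^2,
\end{flalign*}
which eliminates the projection and converts the problem into a purely algebraic one.

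Next, I would reorganize the right-hand side by adding and subtracting $(1-\beta_t\gamma_t)\mu_i$ so that the quadratic in $q_{i,t-1}-\mu_i$ appears explicitly, and then expand $\|\cdot\|^2$. The clean route is to split the vector as $w+\gamma_t b_{i,t}$ with $w=(1-\beta_t\gamma_t)q_{i,t-1}-\mu_i$, which yields three groups of terms: a $\|w\|^2$ part, a cross term $2\gamma_t\langle w, b_{i,t}\rangle$, and a $\gamma_t^2\|b_{i,t}\|^2$ part. Using $(1-\beta_t\gamma_t)^2=(1-\beta_t\gamma_t)-\beta_t\gamma_t(1-\beta_t\gamma_t)$ and the assumption $\beta_t\gamma_t\le 1$ I can discard the nonpositive $-(1-\beta_t\gamma_t)\beta_t\gamma_t\|q_{i,t-1}\|^2$ contribution, after which subtracting $(1-\beta_t\gamma_t)\|q_{i,t-1}-\mu_i\|^2$ and dividing by $2\gamma_t$ brings the inequality into the form
\begin{flalign*}
\Delta_{i,t}(\mu_i)\le \tfrac{\beta_t}{2}\|\mu_i\|^2 + \langle q_{i,t-1}, b_{i,t}\rangle - \langle \mu_i, b_{i,t}\rangle + \tfrac{\gamma_t}{2}\|b_{i,t}\|^2,
\end{flalign*}
matching the skeleton of \eqref{lemma2-eq1}.

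The remaining step is to interpret the two ``residual'' terms. For the quadratic remainder, I would bound $\|b_{i,t}\|\le \|[g_{i,t-1}(x_{i,t-1})]_+\|+\|\partial[g_{i,t-1}(x_{i,t-1})]_+\|\,\|x_{i,t}-x_{i,t-1}\|$ using Assumption~1(ii)--(iii) together with $\|x_{i,t}-x_{i,t-1}\|\le 2R(\mathbb{X})$, obtaining $\|b_{i,t}\|\le F_1+2F_2R(\mathbb{X})\le 2(F_1+F_2R(\mathbb{X}))$ and hence $\tfrac{\gamma_t}{2}\|b_{i,t}\|^2\le 2(F_1+F_2R(\mathbb{X}))^2\gamma_t=\varpi_1\gamma_t$. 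For the term $-\langle\mu_i,b_{i,t}\rangle$, I would split $b_{i,t}$ into its two defining summands: the first gives exactly $-\mu_i^T[g_{i,t-1}(x_{i,t-1})]_+$, and the second, by Cauchy--Schwarz and Assumption~1(iii), is bounded in absolute value by $F_2\|\mu_i\|\,\|x_{i,t}-x_{i,t-1}\|$.

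The main obstacle I anticipate is keeping track of the coefficient in front of $\langle q_{i,t-1},b_{i,t}\rangle$: the naive expansion produces a factor $(1-\beta_t\gamma_t)$ rather than the bare $1$ that appears in \eqref{lemma2-eq1}. The easiest way to absorb this discrepancy is to use the componentwise non-negativity of both $q_{i,t-1}$ and the projection-related subgradient direction, or alternatively to merge the extra $-\beta_t\gamma_t\langle q_{i,t-1},b_{i,t}\rangle$ into the $\varpi_1\gamma_t$ slack by exploiting $\beta_t\gamma_t\le 1$ and the $\|b_{i,t}\|$ bound derived above. This bookkeeping, together with the analogous treatment in \cite{Yi2023}, yields precisely \eqref{lemma2-eq1}.
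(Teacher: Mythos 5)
First, a point of reference: the paper does not actually prove this lemma --- its ``proof'' is a one-line citation to Lemma~5 of \cite{Yi2023} --- so your attempt can only be judged against that standard projected dual-ascent analysis. Your skeleton matches it: non-expansiveness of $[\cdot]_+$ applied to the re-indexed update $q_{i,t}=[(1-\beta_t\gamma_t)q_{i,t-1}+\gamma_t b_{i,t}]_+$ with $\mu_i\ge 0$, expansion of the square using $\beta_t\gamma_t\le 1$, the bound $\|b_{i,t}\|\le F_1+2F_2R(\mathbb{X})$ from Assumption~1, and Cauchy--Schwarz to split $-\mu_i^Tb_{i,t}$ into $-\mu_i^T[g_{i,t-1}(x_{i,t-1})]_+ + F_2\|\mu_i\|\|x_{i,t}-x_{i,t-1}\|$. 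Those pieces are fine.

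The genuine gap is exactly the issue you flag in your last paragraph and then dispose of with a gesture. The expansion you describe yields the cross term with coefficient $(1-\beta_t\gamma_t)$ together with the (correctly discarded or retained) term $-\tfrac{\beta_t(1-\beta_t\gamma_t)}{2}\|q_{i,t-1}\|^2$; passing from that to your intermediate display (coefficient $1$ on $\langle q_{i,t-1},b_{i,t}\rangle$ with only $\tfrac{\gamma_t}{2}\|b_{i,t}\|^2$ of slack) is valid if and only if $(1-\beta_t\gamma_t)\|q_{i,t-1}\|^2+2\gamma_t\langle q_{i,t-1},b_{i,t}\rangle\ge 0$, which can fail, since $\langle q_{i,t-1},b_{i,t}\rangle$ may be negative. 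Neither of your two repairs closes this. (i) Componentwise non-negativity does not hold: $q_{i,t-1}\ge 0$, but $b_{i,t}$ contains $(\partial[g_{i,t-1}(x_{i,t-1})]_+)^T(x_{i,t}-x_{i,t-1})$, whose entries can be negative, so sign reasoning cannot restore the coefficient $1$. (ii) Absorbing $\beta_t\gamma_t|\langle q_{i,t-1},b_{i,t}\rangle|$ into $\varpi_1\gamma_t$ requires a uniform bound of the form $\beta_t\|q_{i,t-1}\|\le \mathrm{const}$; this does not follow from $\beta_t\gamma_t\le 1$ and the $\|b_{i,t}\|$ bound alone, but needs a separate induction on the dual recursion (using $q_{i,1}=\mathbf{0}_{m_i}$ and $\{\beta_t\}$ non-increasing, giving $\|q_{i,t}\|\le (F_1+2F_2R(\mathbb{X}))/\beta_t$), which you never state or prove. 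Moreover, even granting that bound, the absorbed quantity is of order $\gamma_t(F_1+2F_2R(\mathbb{X}))^2$, while your own estimate $\tfrac{\gamma_t}{2}\|b_{i,t}\|^2\le\varpi_1\gamma_t$ already exhausts the entire $\varpi_1\gamma_t$ budget; so your route would deliver the inequality only with a strictly larger constant than the stated $\varpi_1=2(F_1+F_2R(\mathbb{X}))^2$ (harmless for the downstream regret/violation bounds, but not the lemma as written). As it stands, the central step of the proof is therefore incomplete.
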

\begin{proof}
The proof is presented in Lemma 5 of \cite{Yi2023}.
\end{proof}

Next, we present network regret bound at one slot.
\begin{lemma}\label{lem3}
Suppose Assumptions 1--3 hold. For all $i \in [ n ]$, let $\{ {{x_{i,t}}}\}$ be the sequences generated by Algorithm 1 and $\{ {{y_t}} \}$ be an arbitrary sequence in $\mathbb{X}$, then
\begin{flalign}
\nonumber
& \;\;\;\;\;\frac{1}{n}\sum\limits_{i = 1}^n {{f_t}( {{x_{i,t}}} )}  - {f_t}( {{y_t}} ) \\
\nonumber
& \le \frac{1}{n}\sum\limits_{i = 1}^n {q_{i,t}^T\big( {{{[ {{g_{i,t}}( {{y_t}} )} ]}_ + } - {b_{i,t + 1}}} \big) - \frac{1}{{2n{\alpha _{t + 1}}}}} \sum\limits_{i = 1}^n {{{\| \varepsilon _{i,t}^x \|}^2}} \\
\nonumber
& \;\;+ \frac{1}{n}\sum\limits_{i = 1}^n {{F_2}( {2\| {{{\hat x}_{i,t}} - {{\bar x}_t}} \| + \| {{x_{i,t}} - {x_{i,t + 1}}} \|} )}  + 2{F_2}{\tau _t} \\
\nonumber
& \;\;+ \frac{1}{{2n{\alpha _{t + 1}}}}\sum\limits_{i = 1}^n ( {{\| {{y_t} - {z_{i,t + 1}}} \|}^2} - {{\| {{y_{t + 1}} - {z_{i,t + 2}}} \|}^2} \\
& \;\;+ {{\| {{y_{t + 1}} - {{\hat x}_{i,t + 1}}} \|}^2} - {{\| {{y_t} - {x_{i,t + 1}}} \|}^2} ), \label{lemma3-eq1}
\end{flalign}
where $\varepsilon _{i,t}^x = {x_{i,t + 1}} - {z_{i,t + 1}}$.
\end{lemma}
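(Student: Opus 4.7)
The plan is to modify the one-slot regret proof from Lemma~6 of \cite{Yi2023} to accommodate the event-triggered broadcast, noting that the only new phenomenon is that neighbours see $\hat{x}_{j,t}$ in place of $x_{j,t}$, and that by construction of the triggering rule we have $\|\hat{x}_{i,t+1}-x_{i,t+1}\|\le \tau_{t+1}$ in every round (either the event fires and the gap is zero, or it does not fire and the gap is strictly below the current threshold).

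First I would invoke the convexity of each $f_{j,t}$, rewrite $\frac{1}{n}\sum_i f_t(x_{i,t})-f_t(y_t)$ as a double sum $\frac{1}{n^2}\sum_{i,j}[f_{j,t}(x_{i,t})-f_{j,t}(y_t)]$, and insert the intermediate point $\bar{x}_t$ so that the mismatch can be written in terms of subgradients acting on $x_{i,t}-y_t$ plus Lipschitz remainders controlled by $\|\hat{x}_{i,t}-\bar{x}_t\|$ and $\|x_{i,t}-x_{i,t+1}\|$ (using \eqref{ass-eq3a}). This is exactly where the factor $2F_2(2\|\hat{x}_{i,t}-\bar{x}_t\|+\|x_{i,t}-x_{i,t+1}\|)$ first appears. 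Symmetrically, convexity of $[g_{i,t}(\cdot)]_+$ applied between $y_t$ and $x_{i,t}$, together with the definition of $b_{i,t+1}$ in Lemma~2, yields $q_{i,t}^T([g_{i,t}(y_t)]_+-b_{i,t+1})\ge q_{i,t}^T\partial[g_{i,t}(x_{i,t})]_+(y_t-x_{i,t})$, which supplies the dual contribution in \eqref{lemma3-eq1}.

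Next I would exploit the primal update \eqref{Algorithm1-eq3}. Nonexpansiveness of $\mathcal{P}_\mathbb{X}$ together with $y_t\in\mathbb{X}$ gives
\begin{align*}
\|x_{i,t+1}-y_t\|^2 &\le \|z_{i,t+1}-\alpha_{t+1}\omega_{i,t+1}-y_t\|^2\\
&= \|z_{i,t+1}-y_t\|^2-2\alpha_{t+1}\langle \omega_{i,t+1},z_{i,t+1}-y_t\rangle\\
&\quad+\alpha_{t+1}^2\|\omega_{i,t+1}\|^2,
\end{align*}
which, rearranged and averaged over $i$, produces the ratio $1/(2n\alpha_{t+1})$ in front of the squared-distance telescope and the $-\frac{1}{2n\alpha_{t+1}}\sum_i\|\varepsilon^x_{i,t}\|^2$ term (using $\varepsilon^x_{i,t}=x_{i,t+1}-z_{i,t+1}$). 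Combining $\langle \omega_{i,t+1},z_{i,t+1}-y_t\rangle$ with the subgradient pieces obtained above recovers the primal subgradient inner product $\langle \partial f_{i,t}(x_{i,t})+\partial[g_{i,t}(x_{i,t})]_+ q_{i,t},\;x_{i,t}-y_t\rangle$, at the cost of further Lipschitz remainders already absorbed into the $2F_2$ term.

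The delicate step, and the one that differentiates this proof from the non-triggered version, is converting the trailing $-\|y_t-x_{i,t+1}\|^2$ into the telescoping $-\|y_{t+1}-z_{i,t+2}\|^2+\|y_{t+1}-\hat{x}_{i,t+1}\|^2$. Because $z_{i,t+2}=\sum_{j}[W_{t+1}]_{ij}\hat{x}_{j,t+1}$ uses the broadcast copies, I would apply convexity of $\|y_{t+1}-\cdot\|^2$ and doubly stochasticity (Assumption~2(ii)) to obtain
\[
\|y_{t+1}-z_{i,t+2}\|^2\le \sum_{j}[W_{t+1}]_{ij}\|y_{t+1}-\hat{x}_{j,t+1}\|^2,
\]
then average over $i$ to cancel the mixing weights and reduce the estimate to $\frac{1}{n}\sum_i\|y_{t+1}-\hat{x}_{i,t+1}\|^2$. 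The residual gap $\|y_{t+1}-\hat{x}_{i,t+1}\|^2-\|y_{t+1}-x_{i,t+1}\|^2$ is bounded via $\|\hat{x}_{i,t+1}-x_{i,t+1}\|\le\tau_{t+1}$ and boundedness of $\mathbb{X}$, giving a contribution of order $\tau_{t+1}$; combined with a Lipschitz bound $|f_t(\hat{x}_{i,t})-f_t(x_{i,t})|\le F_2\tau_t$ used earlier when agents' subgradients are evaluated at broadcast points, this aggregates into the $2F_2\tau_t$ constant of \eqref{lemma3-eq1}. Summing the resulting pointwise inequalities over $i$ and rearranging yields \eqref{lemma3-eq1}; the main obstacle is book-keeping the three kinds of decision copies ($x_{i,t}$, $\hat{x}_{i,t}$, $z_{i,t}$) consistently so that the Lipschitz slacks collapse into the single $\tau_t$ term and the squared-distance differences fit the advertised telescoping form.
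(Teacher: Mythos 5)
Your overall route is the paper's route: establish from the triggering rule that $\| {{{\hat x}_{i,t}} - {x_{i,t}}} \| \le {\tau _t}$ for every $t$, use Lipschitz continuity (from \eqref{ass-eq3a}) to compare the network loss $\frac{1}{n}\sum_i f_t(x_{i,t})$ with the local losses through $\hat x_{i,t}$ and $\bar x_t$, and then run the standard primal--dual/projection analysis of Lemma~6 in \cite{Yi2023}, with the convexity-of-$\|y_{t+1}-\cdot\|^2$ plus double-stochasticity argument justifying the appearance of $\|y_{t+1}-\hat x_{i,t+1}\|^2-\|y_{t+1}-z_{i,t+2}\|^2$ (after summing over $i$ this quantity is nonnegative, so it can be added to the bound). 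Up to that point your sketch matches the paper, which after deriving \eqref{lemma3-proof-eq2} simply invokes the proof of Lemma~6 of \cite{Yi2023}.

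However, your final accounting step would fail. You claim that the residual gap $\|y_{t+1}-\hat x_{i,t+1}\|^2-\|y_{t+1}-x_{i,t+1}\|^2$ is bounded via $\|\hat x_{i,t+1}-x_{i,t+1}\|\le\tau_{t+1}$ and $R(\mathbb{X})$ and that this, together with a bound of the type $|f_t(\hat x_{i,t})-f_t(x_{i,t})|\le F_2\tau_t$, "aggregates into the $2F_2\tau_t$ constant" of \eqref{lemma3-eq1}. This is not how the lemma is structured, and the bookkeeping does not close: the squared-distance terms $+\|y_{t+1}-\hat x_{i,t+1}\|^2-\|y_t-x_{i,t+1}\|^2$ sit inside the $\frac{1}{2n\alpha_{t+1}}(\cdot)$ bracket and must be kept explicit in the lemma; if you bound the $\hat x$-versus-$x$ gap here, the prefactor $\frac{1}{2\alpha_{t+1}}$ produces a term of order $R(\mathbb{X})\tau_{t+1}/\alpha_{t+1}$ (this is exactly what the paper does only later, in \eqref{lemma4-proof-eq1}--\eqref{lemma4-proof-eq2}), which cannot be absorbed into a step-size-free term $2F_2\tau_t$ whose constant involves $F_2$, not $R(\mathbb{X})$. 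The $2F_2\tau_t$ term actually comes solely from the network-versus-local loss comparison, via $\|x_{i,t}-x_{j,t}\|\le\|\hat x_{i,t}-\bar x_t\|+\|\hat x_{j,t}-\bar x_t\|+2\tau_t$ as in \eqref{lemma3-proof-eq2}; note also that Algorithm~1 evaluates all subgradients at $x_{i,t}$, not at the broadcast points, so no "subgradient at $\hat x$" correction arises. A secondary slip: plain nonexpansiveness of $\mathcal{P}_\mathbb{X}$ as you display it (with the $+\alpha_{t+1}^2\|\omega_{i,t+1}\|^2$ term) does not yield the negative $-\frac{1}{2\alpha_{t+1}}\|\varepsilon^x_{i,t}\|^2$ term; you need the projection optimality (obtuse-angle) inequality $\langle x_{i,t+1}-z_{i,t+1}+\alpha_{t+1}\omega_{i,t+1},\,y_t-x_{i,t+1}\rangle\ge 0$, which gives $\|x_{i,t+1}-y_t\|^2\le\|z_{i,t+1}-y_t\|^2-\|\varepsilon^x_{i,t}\|^2-2\alpha_{t+1}\langle\omega_{i,t+1},x_{i,t+1}-y_t\rangle$.
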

\begin{proof}
From Assumption 1, for $i \in [ n ]$, $t \in {\mathbb{N}_ + }$, $x,y \in \mathbb{X}$, we have
\begin{subequations}
\begin{flalign}
| {{f_{i,t}}( x ) - {f_{i,t}}( y )} | &\le {F_2}\| {x - y} \|, \label{lemma3-proof-eq1a}\\
\| {{g_{i,t}}( x ) - {g_{i,t}}( y )} \| &\le {F_2}\| {x - y} \|. \label{lemma3-proof-eq1b}
\end{flalign}
\end{subequations}

We make a critical analysis to obtain a bound for the difference between decisions induced by Algorithm~1 and decisions triggered at each iteration in the following.

From Algorithm~1, for any $t \in {\mathbb{N}_ + }$, if $\| {{x_{i,t + 1}} - {{\hat x}_{i,t}}} \| \ge {\tau _{t + 1}}$, then $\| {{{\hat x}_{i,t + 1}} - {x_{i,t + 1}}} \| \le {\tau _{t + 1}}$. If $\| {{x_{i,t + 1}} - {{\hat x}_{i,t}}} \| < {\tau _{t + 1}}$, then ${{\hat x}_{i,t + 1}} = {{\hat x}_{i,t}}$ and we still have $\| {{{\hat x}_{i,t + 1}} - {x_{i,t + 1}}} \| \le {\tau _{t + 1}}$. Therefore, we always have $\| {{{\hat x}_{i,t}} - {x_{i,t}}} \| \le {\tau _t}$ for any $t \ge 2$, $i \in [ n ]$. Recall that ${{\hat x}_{i,1}} = {x_{i,1}}$. Thus, $\| {{{\hat x}_{i,t}} - {x_{i,t}}} \| \le {\tau _t}$ for any $t \ge 1$, $i \in [ n ]$.

From \eqref{lemma3-proof-eq1a} and $\| {{{\hat x}_{i,t}} - {x_{i,t}}} \| \le {\tau _t}$, we have
\begin{flalign}
\nonumber
&\;\;\;\;\;\frac{1}{n}\sum\limits_{i = 1}^n {{f_t}( {{x_{i,t}}} )} \\
\nonumber
& = \frac{1}{n}\sum\limits_{i = 1}^n {{f_{i,t}}( {{x_{i,t}}} )}  + \frac{1}{{{n^2}}}\sum\limits_{i = 1}^n {\sum\limits_{j = 1}^n {\big( {{f_{j,t}}( {{x_{i,t}}} ) - {f_{j,t}}( {{x_{j,t}}} )} \big)} } \\
\nonumber
& \le \frac{1}{n}\sum\limits_{i = 1}^n {{f_{i,t}}( {{x_{i,t}}} )}  + \frac{{{F_2}}}{{{n^2}}}\sum\limits_{i = 1}^n {\sum\limits_{j = 1}^n {\| {{x_{i,t}} - {x_{j,t}}} \|} } \\
& \le \frac{1}{n}\sum\limits_{i = 1}^n {{f_{i,t}}( {{x_{i,t}}} )}  + \frac{{2{F_2}}}{n}\sum\limits_{i = 1}^n {\| {{{\hat x}_{i,t}} - {{\bar x}_t}} \|}  + 2{F_2}{\tau _t}. \label{lemma3-proof-eq2}
\end{flalign}

It then follows from the proof of Lemma 6 of \cite{Yi2023} that \eqref{lemma3-eq1} holds.
\end{proof}
\begin{lemma}\label{lem4}
Suppose Assumptions 1--3 hold, and ${\gamma _t}{\beta _t} \le 1$, $t \in {\mathbb{N}_ + }$. For all $i \in [ n ]$, let $\{ {{x_{i,t}}}\}$ be the sequences generated by Algorithm~1. Then, for any comparator sequence ${y_{[ T ]}} \in~{\mathcal{X}_T}$,
\begin{flalign}
\nonumber
& \;\;\;\;\;{{\rm{Net}\mbox{-}\rm{Reg}}( {\{ {{x_{i,t}}} \},{y_{[ T ]}}} )} \\
\nonumber
& \le 4{F_2}{\varpi _2} + {\varpi _1}\sum\limits_{t = 1}^T {{\gamma _t}}  + 10{\varpi _6}\sum\limits_{t = 1}^T {\alpha _t} + {\varpi _7}\sum\limits_{t = 1}^T {{\tau _t}} \\
\nonumber
&  \;\;+ 2R( \mathbb{X} )\sum\limits_{t = 1}^T {\frac{{{\tau _{t+1}}}}{{{\alpha _{t+1}}}}}  + \frac{{2R{{( \mathbb{X} )}^2}}}{{{\alpha _{T + 1}}}} + \frac{{2R( \mathbb{X} )}}{{{\alpha _T}}}{P_T} 
\end{flalign}
\begin{flalign}
&  \;\;- \frac{1}{{2n}}\sum\limits_{t = 1}^T {\sum\limits_{i = 1}^n {( {\frac{1}{{{\gamma _t}}} - \frac{1}{{{\gamma _{t + 1}}}} + {\beta _{t + 1}}} )} } {\| {{q_{i,t}}} \|^2}, \label{lemma4-eq1} \\
\nonumber
& \;\;\;\;\;\frac{1}{n}\sum\limits_{i = 1}^n {{\| {\sum\limits_{t = 1}^T {{{[ {{g_t}( {{x_{i,t}}} )} ]}_ + }} } \|^2}} \\
\nonumber
& \le 8n{F_1}{F_2}{\varpi _2}T + 2n{F_1}{\varpi _{10}}T\sum\limits_t^T {{\tau _t}} \\
\nonumber
&  \;\;+ 2\big( {\frac{1}{{{\gamma _1}}} + \sum\limits_{t = 1}^T {( {{\beta _t} + {\varpi _9}{\alpha _t}} )} } \big)\big( n{F_1}T + 4n{F_2}{\varpi _2} \\
\nonumber
&  \;\;+ n{\varpi _1}\sum\limits_{t = 1}^T {{\gamma _t}}  + 20n{\varpi _6}\sum\limits_{t = 1}^T {{\alpha _t}}  + n{\varpi _{10}}\sum\limits_{t = 1}^T {{\tau _t}} \\
\nonumber
& \;\;+ 2nR( \mathbb{X} )\sum\limits_{t = 1}^T {\frac{{{\tau _{t + 1}}}}{{{\alpha _{t + 1}}}}}  + \frac{{2nR{{( \mathbb{X} )}^2}}}{{{\alpha _{T + 1}}}} \\
& \;\;- \frac{1}{2}\sum\limits_{t = 1}^T {\sum\limits_{i = 1}^n {( {\frac{1}{{{\gamma _t}}} - \frac{1}{{{\gamma _{t + 1}}}} + {\beta _{t + 1}}} ){\| {{q_{i,t}} - \mu _{ij}^0} \|^2}} }  \big), \label{lemma4-eq2}
\end{flalign}
where ${\varpi _2} = \frac{\tau }{{\lambda ( {1 - \lambda } )}}\sum\limits_{i = 1}^n {\| {{{\hat x}_{i,1}}} \|} $, ${\varpi _3} = 2{F_2} + \frac{{{n^2}{\tau ^2}{F_2}}}{{2{{\left( {1 - \lambda } \right)}^2}}}$, ${\varpi _4} = 2 + \frac{{n\tau }}{{1 - \lambda }}$, ${\varpi _5} = 2{F_2}{\varpi _3} + \frac{{F_2^2}}{4}$, ${\varpi _6} = 2{F_2}{\varpi _3} + {\varpi _5}$, ${\varpi _7} = 4{F_2}{\varpi _4} + 3{F_2}$, ${\varpi _8} = 1 + {\varpi _4}$, ${\varpi _9} = 20{\varpi _6}$, ${\varpi _{10}} = 3{F_2}{\varpi _4} + {F_2}{\varpi _8} + 2F_2$, and $\mu _{ij}^0 = \frac{{\sum\nolimits_{t = 1}^T {{{[ {{g_{i,t}}( {{x_{j,t}}} )} ]}_ + }} }}{{\frac{1}{{{\gamma _1}}} + \sum\nolimits_{t = 1}^T {( {{\beta _t} + {\varpi _9}{\alpha _t}} )} }}$.
\end{lemma}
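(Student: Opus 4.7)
The plan is to combine the per-slot regret inequality \eqref{lemma3-eq1} of Lemma~3 with the dual recursion \eqref{lemma2-eq1} of Lemma~2, sum over $t=1,\ldots,T$, and collect telescoping terms, using the consensus bound of Lemma~1 to control the disagreement among agents. The new element relative to~\cite{Yi2023} is the systematic tracking of the event-triggering threshold $\tau_t$ in every step where $\hat{x}_{i,t}$ and $x_{i,t}$ differ. Recall that the proof of Lemma~3 already establishes the pointwise gap $\|\hat{x}_{i,t}-x_{i,t}\|\le\tau_t$ for all $i\in[n]$ and $t\in\mathbb{N}_+$, which I would use throughout.

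\textbf{Regret bound \eqref{lemma4-eq1}.} I would first set $\mu_i=\mathbf{0}_{m_i}$ in \eqref{lemma2-eq1}. Since $y_{[T]}\in\mathcal{X}_T$ gives $[g_{i,t}(y_t)]_+=\mathbf{0}_{m_i}$, the cross term $\tfrac{1}{n}\sum_i q_{i,t}^T([g_{i,t}(y_t)]_+ - b_{i,t+1})$ appearing in Lemma~3 is dominated by $-\Delta_{i,t+1}(\mathbf{0})+\varpi_1\gamma_{t+1}$ plus a residual proportional to $\|x_{i,t+1}-x_{i,t}\|$, which is absorbed into the $10\varpi_6\sum_t\alpha_t$ term via the primal update \eqref{Algorithm1-eq3}. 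Summing the resulting inequality causes the $\Delta_{i,t+1}(\mathbf{0})$ pieces to telescope; regrouping the factor $(1-\beta_{t+1}\gamma_{t+1})$ produces exactly the coefficient $\tfrac{1}{\gamma_t}-\tfrac{1}{\gamma_{t+1}}+\beta_{t+1}$ multiplying $\|q_{i,t}\|^2$. Next, I would apply Lemma~1 to bound $\sum_{t,i}\|\hat{x}_{i,t}-\bar{x}_t\|$, decomposing $\hat{\varepsilon}_{i,s}^x=(\hat{x}_{i,s+1}-x_{i,s+1})+(x_{i,s+1}-z_{i,s+1})$ and controlling the two pieces by $\tau_{s+1}$ and $\alpha_{s+1}(F_2+F_2\|q_{i,s}\|)$, respectively, which feeds the $\sum_t\alpha_t$ and $\sum_t\tau_t$ contributions. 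Finally, the telescoping pair $\|y_t-z_{i,t+1}\|^2-\|y_{t+1}-z_{i,t+2}\|^2$ yields $\tfrac{2R(\mathbb{X})^2}{\alpha_{T+1}}+\tfrac{2R(\mathbb{X})}{\alpha_T}P_T$ through the standard identity $\|y_t-z\|^2-\|y_{t+1}-z\|^2\le 2R(\mathbb{X})\|y_{t+1}-y_t\|$, while $\|y_{t+1}-\hat{x}_{i,t+1}\|^2-\|y_t-x_{i,t+1}\|^2$ produces the new $2R(\mathbb{X})\sum_t\tau_{t+1}/\alpha_{t+1}$ term after applying $\|\hat{x}_{i,t+1}-x_{i,t+1}\|\le\tau_{t+1}$ and the diameter bound $\|y_{t+1}-\hat{x}_{i,t+1}\|+\|y_t-x_{i,t+1}\|\le 4R(\mathbb{X})$.

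\textbf{Constraint violation bound \eqref{lemma4-eq2}.} The same combined inequality is reused, but now with $\mu_i=\mu_{ij}^0$; this vector is chosen precisely so that after summing the $-\mu_i^T[g_{i,t-1}(x_{i,t-1})]_+$ contribution over $t$ and cancelling the $\tfrac{1}{2}\beta_t\|\mu_i\|^2$ penalty, the leading term becomes $-\|\sum_t[g_{i,t}(x_{j,t})]_+\|^2/(\tfrac{1}{\gamma_1}+\sum_t(\beta_t+\varpi_9\alpha_t))$ on the right. Running the same telescoping argument as for the regret (but crudely upper-bounding $f_t$ by $F_1$, which is the source of the $nF_1T$ factor) yields a one-sided inequality on $\|\sum_t[g_{i,t}(x_{j,t})]_+\|^2$. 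Passing from this quantity to the desired $\tfrac{1}{n}\sum_i\|\sum_t[g_t(x_{i,t})]_+\|^2$ is done via Jensen's inequality together with the Lipschitz bound on $g_{i,t}$ established in \eqref{lemma3-proof-eq1b}: the difference $[g_t(x_{i,t})]_+ - [g_{i,t}(x_{j,t})]_+$ is controlled by the consensus disagreement and the $\tau_t$ gap, which is exactly what produces the $8nF_1F_2\varpi_2T$ and $2nF_1\varpi_{10}T\sum_t\tau_t$ corrections on the right of \eqref{lemma4-eq2}.

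\textbf{Main obstacle.} The hardest bookkeeping is preserving the telescoping structure in the presence of the $\hat{x}$--$x$ gap. Each of the three places where $\tau_t$ enters --- the disagreement sum coming from Lemma~1, the perturbation of $f_t$ coming from the proof of Lemma~3, and the potential difference $\|y_{t+1}-\hat{x}_{i,t+1}\|^2-\|y_t-x_{i,t+1}\|^2$ --- spawns cross terms that must be collected cleanly into the $\varpi_7\sum_t\tau_t$ and $2R(\mathbb{X})\sum_t\tau_{t+1}/\alpha_{t+1}$ summands on the right of \eqref{lemma4-eq1} without spoiling the $1/\alpha_{T+1}$, $P_T/\alpha_T$, or $(\tfrac{1}{\gamma_t}-\tfrac{1}{\gamma_{t+1}}+\beta_{t+1})\|q_{i,t}\|^2$ structure, since these are the precise terms that the parameter tunings of Theorems~1 and~2 are designed to balance.
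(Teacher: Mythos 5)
Your overall skeleton matches the paper's: combine Lemma~2 and Lemma~3, exploit $[g_{i,t}(y_t)]_+=\mathbf{0}$, telescope the $\Delta_{i,t+1}$ and the $\|y_t-z_{i,t+1}\|^2$ terms, bound the disagreement via Lemma~1 with $\|\hat\varepsilon_{i,t-1}^x\|\le\|\varepsilon_{i,t-1}^x\|+\tau_t$, take $\mu_i=\mathbf{0}_{m_i}$ for \eqref{lemma4-eq1} and $\mu_i=\mu_{ij}^0$ together with $-{\rm{Net}\mbox{-}\rm{Reg}}\le F_1T$ for \eqref{lemma4-eq2}. However, there is one step in your regret argument that does not go through as described: you propose to control the piece $\|\varepsilon_{i,t}^x\|=\|x_{i,t+1}-z_{i,t+1}\|$ by $\alpha_{t+1}(F_2+F_2\|q_{i,t}\|)$ via \eqref{Algorithm1-eq2}--\eqref{Algorithm1-eq3} and claim this ``feeds the $\sum_t\alpha_t$ contribution.'' It does not, because no uniform bound on the dual variables $\|q_{i,t}\|$ is assumed or established; the only control on them is the negative term $-\frac{1}{2n}\sum_t\sum_i(\frac{1}{\gamma_t}-\frac{1}{\gamma_{t+1}}+\beta_{t+1})\|q_{i,t}\|^2$, and absorbing your extra $\alpha_t\|q_{i,t}\|$ terms into it by Young's inequality produces additive terms of order $\sum_t\alpha_t^2/(\frac{1}{\gamma_t}-\frac{1}{\gamma_{t+1}}+\beta_{t+1})$, which both changes the constants in \eqref{lemma4-eq1} and, under the parameter choices of Theorem~1 (e.g.\ $\tau_t=1/t^\theta$ with $\theta\le 1$), strictly degrades the final rates. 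The paper avoids this entirely: it never bounds $\|\varepsilon_{i,t}^x\|$ by the step size, but instead keeps the negative quadratic $-\frac{1}{2\alpha_{t+1}}\|\varepsilon_{i,t}^x\|^2$ coming from \eqref{lemma3-eq1} and absorbs every positive term that is linear in $\|\varepsilon_{i,t}^x\|$ (from the Lemma~1 disagreement bound and from $\|x_{i,t}-x_{i,t+1}\|$) through Young's inequality with a fixed constant ($a=10$ for the regret, $a=20$ for the violation), which is exactly what produces the coefficients $10\varpi_6$ and $20\varpi_6$.

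A second, more minor point concerns \eqref{lemma4-eq2}. The passage from $\|\sum_t[g_{i,t}(x_{j,t})]_+\|^2$ to $\frac{1}{n}\sum_j\|\sum_t[g_t(x_{j,t})]_+\|^2$ requires no Jensen or Lipschitz argument: since $g_t$ is the concatenation of the $g_{i,t}$, the identity $\sum_i\sum_j\|\sum_t[g_{i,t}(x_{j,t})]_+\|^2=\sum_j\|\sum_t[g_t(x_{j,t})]_+\|^2$ is exact (this is \eqref{lemma4-proof-eq25}; Cauchy--Schwarz only enters later, in the proof of Theorem~1). The correction terms $8nF_1F_2\varpi_2T$ and $2nF_1\varpi_{10}T\sum_t\tau_t$ arise elsewhere: from replacing $x_{i,t}$ by $x_{j,t}$ inside the dual pairing $\mu_i^T[g_{i,t}(\cdot)]_+$ using the Lipschitz bound, the consensus error and the $\tau_t$ gap (as in \eqref{lemma4-proof-eq17}), and then evaluating the resulting $\|\mu_i\|$-weighted terms at $\mu_i=\mu_{ij}^0$ with $\|\mu_{ij}^0\|\le F_1T/\big(\frac{1}{\gamma_1}+\sum_t(\beta_t+\varpi_9\alpha_t)\big)$ as in \eqref{lemma4-proof-eq27}. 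You have these ingredients, but they must be deployed before the choice of $\mu_{ij}^0$, inside the summed primal--dual inequality, not as a post hoc comparison of the two violation quantities.
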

\begin{proof}
($\mathbf{i}$)
We first provide a loose bound for network regret.

From \eqref{ass-eq1}, we have
\begin{flalign}
\nonumber
& \;\;\;\;\;{\| {{y_{t + 1}} - {{\hat x}_{i,t + 1}}} \|^2} - {\| {{y_t} - {x_{i,t + 1}}} \|^2} \\
\nonumber
& \le \| {{y_{t + 1}} - {y_t} - {{\hat x}_{i,t + 1}} + {x_{i,t + 1}}} \|\| {{y_{t + 1}} - {x_{i,t + 1}} + {y_t} - {x_{i,t + 1}}} \| \\
& \le 4R( \mathbb{X} )\| {{y_{t + 1}} - {y_t}} \| + 4R( \mathbb{X} ){\tau _{t+1}}. \label{lemma4-proof-eq1}
\end{flalign}
From \eqref{lemma2-eq1}, \eqref{lemma3-eq1} and \eqref{lemma4-proof-eq1}, we have
\begin{flalign}
\nonumber
& \;\;\;\;\;\frac{1}{n}\sum\limits_{i = 1}^n {\big( {{\Delta _{i,t + 1}}( {{\mu _i}} ) + \mu _i^T{{[ {{g_{i,t}}( {{x_{i,t}}} )} ]}_ + } - \frac{1}{2}{\beta _{t + 1}}{{\| {{\mu _i}} \|}^2}} \big)} \\
\nonumber
&  \;\;+ \frac{1}{n}\sum\limits_{i = 1}^n {{f_t}( {{x_{i,t}}} )}  - {f_t}( {{y_t}} ) \\
\nonumber
& \le {\varpi _1}{\gamma _{t + 1}} + \frac{1}{n}\sum\limits_{i = 1}^n {{{\tilde \Delta }_{i,t + 1}}( {{\mu _i}} )} \\
\nonumber
& \;\;+ \frac{1}{{2n{\alpha _{t + 1}}}}\sum\limits_{i = 1}^n {( {{{\| {{y_t} - {z_{i,t + 1}}} \|}^2} - {{\| {{y_{t + 1}} - {z_{i,t + 2}}} \|}^2}} )} \\
&  \;\;+ 2{F_2}{\tau _t} + \frac{{2R( \mathbb{X} ){\tau _{t+1}}}}{{{\alpha _{t + 1}}}} + \frac{{2R( \mathbb{X} )}}{{{\alpha _{t + 1}}}}\| {{y_{t + 1}} - {y_t}} \|, \label{lemma4-proof-eq2}
\end{flalign}
where
\begin{flalign}
\nonumber
{{\tilde \Delta }_{i,t + 1}}( {{\mu _i}} ) &= {F_2}( {\| {{\mu _i}} \| + 1} )\| {{x_{i,t}} - {x_{i,t + 1}}} \| \\
\nonumber
&  \;\;+ 2{F_2}\| {{{\hat x}_{i,t}} - {{\bar x}_t}} \| - \frac{1}{{2{\alpha _{t + 1}}}}{\| {\varepsilon _{i,t}^x} \|^2}.
\end{flalign}
From \eqref{ass-eq1} and $\{ {{\alpha _t}} \}$ is non-increasing, we have
\begin{flalign}
\nonumber
& \;\;\;\;\;\sum\limits_{t = 1}^T {\frac{1}{{{\alpha _{t + 1}}}}} ( {{{\| {{y_t} - {z_{i,t + 1}}} \|}^2} - {{\| {{y_{t + 1}} - {z_{i,t + 2}}} \|}^2}} ) 
\end{flalign}
\begin{flalign}
\nonumber
& = \sum\limits_{t = 1}^T \big( {\frac{1}{{{\alpha _t}}}{{\| {{y_t} - {z_{i,t + 1}}} \|}^2} - \frac{1}{{{\alpha _{t + 1}}}}{{\| {{y_{t + 1}} - {z_{i,t + 2}}} \|}^2}} \\
\nonumber
& \;\;+ ( {\frac{1}{{{\alpha _{t + 1}}}} - \frac{1}{{{\alpha _t}}}} ){{\| {{y_t} - {z_{i,t + 1}}} \|}^2} \big) \\
\nonumber
& \le \frac{1}{{{\alpha _1}}}{\| {{y_1} - {z_{i,2}}} \|^2} - \frac{1}{{{\alpha _{T + 1}}}}{\| {{y_{T + 1}} - {z_{i,T + 2}}} \|^2} \\
& \;\;+ \sum\limits_{t = 1}^T {( {\frac{1}{{{\alpha _{t + 1}}}} - \frac{1}{{{\alpha _t}}}} )4R{( \mathbb{X} )^2}}
\le \frac{{4R{( \mathbb{X} )^2}}}{{{\alpha _{T + 1}}}}. \label{lemma4-proof-eq3}
\end{flalign}
From \eqref{lemma4-proof-eq3} and $\{ {{\alpha _t}} \}$ is non-increasing, setting ${y_{T + 1}} = {y_T}$ and ${\mu _i} = {\mathbf{0}_{{m_i}}}$, summing \eqref{lemma4-proof-eq2} over $t \in [ T ]$ gives
\begin{flalign}
\nonumber
& \;\;\;\;\;{{\rm{Net}\mbox{-}\rm{Reg}}( {\{ {{x_{i,t}}} \},{y_{[ T ]}}} )}  + \frac{1}{n}\sum\limits_{t = 1}^T {\sum\limits_{i = 1}^n {{\Delta _{i,t + 1}}( {{\mathbf{0}_{{m_i}}}} )} } \\
\nonumber
& \le {\varpi _1}\sum\limits_{t = 1}^T {{\gamma _{t + 1}}} + \frac{1}{n}\sum\limits_{t = 1}^T {\sum\limits_{i = 1}^n {{{\tilde \Delta }_{i,t + 1}}( {{\mathbf{0}_{{m_i}}}} )} } + \sum\limits_{t = 1}^T {2{F_2}{\tau _t}} \\
& \;\; + \sum\limits_{t = 1}^T {\frac{{2R( \mathbb{X} ){\tau _{t+1}}}}{{{\alpha _{t + 1}}}}}  + \frac{{2R{{( \mathbb{X} )}^2}}}{{{\alpha _{T+1}}}} + \frac{{2R( \mathbb{X} )}}{{{\alpha _T}}}{P_T}. \label{lemma4-proof-eq4}
\end{flalign}

We then establish a lower bound for network regret.

For any $T \in {\mathbb{N}_ + }$, we have
\begin{flalign}
\nonumber
 \sum\limits_{t = 1}^T {{\Delta _{i,t + 1}}( {{\mu _i}} )}
& = \frac{1}{2}\sum\limits_{t = 1}^T {( {\frac{1}{{{\gamma _t}}} - \frac{1}{{{\gamma _{t + 1}}}} + {\beta _{t + 1}}} ){{\| {{q_{i,t}} - {\mu _i}} \|}^2}} \\
& \;\;+ \frac{{{{\| {{q_{i,T + 1}} - {\mu _i}} \|}^2}}}{{2{\gamma _{T + 1}}}} - \frac{{{{\| {{\mu _i}} \|}^2}}}{{2{\gamma _1}}}. \label{lemma4-proof-eq5}
\end{flalign}

Substituting ${\mu _i} = {\mathbf{0}_{{m_i}}}$ into \eqref{lemma4-proof-eq5} yields
\begin{flalign}
\sum\limits_{t = 1}^T {{\Delta _{i,t + 1}}( {{\mathbf{0}_{{m_i}}}} )}
\ge \frac{1}{2}\sum\limits_{t = 1}^T {( {\frac{1}{{{\gamma _t}}} - \frac{1}{{{\gamma _{t + 1}}}} + {\beta _{t + 1}}} ){{\| {{q_{i,t}}} \|}^2}}. \label{lemma4-proof-eq6}
\end{flalign}

We have
\begin{flalign}
\nonumber
\sum\limits_{t = 1}^T {\sum\limits_{s = 1}^{t - 2} {{\lambda ^{t - s - 2}}\sum\limits_{j = 1}^n {\| {\hat{\varepsilon} _{j,s}^x} \|} } }  & = \sum\limits_{t = 1}^{T - 2} {\sum\limits_{j = 1}^n {\| {\hat{\varepsilon} _{j,t}^x} \|\sum\limits_{s = 0}^{T - t - 2} {{\lambda ^s}} } } \\
& \le \frac{1}{{1 - \lambda }}\sum\limits_{t = 1}^{T - 2} {\sum\limits_{j = 1}^n {\| {\hat{\varepsilon} _{j,t}^x} \|} }. \label{lemma4-proof-eq7}
\end{flalign}
and
\begin{flalign}
\nonumber
\| {\hat \varepsilon _{i,t - 1}^x} \| &= \| {{{\hat x}_{i,t}} - {x_{i,t}} + {x_{i,t}} - {z_{i,t}}} \| \\
&\le \| {\varepsilon _{i,t - 1}^x} \| + {\tau _t}. \label{lemma4-proof-eq8}
\end{flalign}

From \eqref{lemma1-eq1}, \eqref{lemma4-proof-eq7} and \eqref{lemma4-proof-eq8}, we have
\begin{flalign}
\nonumber
& \;\;\;\;\;\sum\limits_{t = 1}^T {\| {{{\hat x}_{i,t}} - {{\bar x}_t}} \|}\\
\nonumber
& \le {\varpi _2} + \frac{1}{n}\sum\limits_{t = 1}^T {\sum\limits_{j = 1}^n {\| {\varepsilon _{j,t - 1}^x} \|} }  + \sum\limits_{t = 1}^T {\| {\varepsilon _{i,t - 1}^x} \|} \\
& \;\;+ \frac{\tau }{{1 - \lambda }}\sum\limits_{t = 1}^{T - 2} {\sum\limits_{j = 1}^n {\| {\varepsilon _{j,t}^x} \|} }  + 2\sum\limits_{t = 1}^T {{\tau _t}}  + \frac{{n\tau }}{{1 - \lambda }}\sum\limits_{t = 1}^{T - 2} {{\tau _{t + 1}}}. \label{lemma4-proof-eq9}
\end{flalign}

For any ${\mu _i} \in {\mathbb{R}^{{m_i}}}$ and $a > 0$, it follows from \eqref{lemma4-proof-eq9} that
\begin{flalign}
\nonumber
& \;\;\;\;\;\sum\limits_{t = 1}^T {\sum\limits_{i = 1}^n {\| {{\mu _i}} \|\| {{{\hat x}_{i,t}} - {{\bar x}_t}} \|} } \\
\nonumber
& \le {\varpi _2}\sum\limits_{i = 1}^n {\| {{\mu _i}} \|} \\
\nonumber
&\;\; + \frac{1}{n}\sum\limits_{t = 2}^T {\sum\limits_{i = 1}^n {\sum\limits_{j = 1}^n {( {\frac{1}{{4a{F_2}{\alpha _t}}}{{\| {\varepsilon _{i,t - 1}^x} \|}^2} + a{F_2}{\alpha _t}{{\| {{\mu _j}} \|}^2}} )} } } \\
\nonumber
&\;\; + \sum\limits_{t = 2}^T {\sum\limits_{i = 1}^n {( {\frac{1}{{4a{F_2}{\alpha _t}}}{{\| {\varepsilon _{i,t - 1}^x} \|}^2} + a{F_2}{\alpha _t}{{\| {{\mu _i}} \|}^2}} )} } \\
\nonumber
&\;\; + \sum\limits_{t = 2}^T {\sum\limits_{i = 1}^n {\sum\limits_{j = 1}^n {( {\frac{1}{{2an{F_2}{\alpha _t}}}{{\| {\varepsilon _{i,t - 1}^x} \|}^2} + \frac{{an{F_2}{\tau ^2}{\alpha _t}}}{{2{{( {1 - \lambda } )}^2}}}{{\| {{\mu _j}} \|}^2}} )} } } \\
\nonumber
&\;\; + \sum\limits_{t = 1}^T {\sum\limits_{i = 1}^n {2{\tau _t}\| {{\mu _i}} \|} }  + \frac{{n\tau }}{{1 - \lambda }}\sum\limits_{t = 2}^T {\sum\limits_{i = 1}^n {{\tau _t}\| {{\mu _i}} \|} } \\
\nonumber
& \le {\varpi _2}\sum\limits_{i = 1}^n {\| {{\mu _i}} \|} + \sum\limits_{t = 2}^T {\sum\limits_{i = 1}^n {\frac{1}{{a{F_2}{\alpha _t}}}{{\| {\varepsilon _{i,t - 1}^x} \|}^2}} } \\
&\;\; + \sum\limits_{t = 2}^T {\sum\limits_{i = 1}^n {a{\varpi _3}{\alpha _t}{{\| {{\mu _i}} \|}^2}} } + {\varpi _4}\sum\limits_{t = 1}^T {\sum\limits_{i = 1}^n {{\tau _t}\| {{\mu _i}} \|} }. \label{lemma4-proof-eq10}
\end{flalign}
For any ${\mu _i} \in {\mathbb{R}^{{m_i}}}$ and $a > 0$, we have
\begin{flalign}
\nonumber
& \;\;\;\;\;\| {{\mu _i}} \|\| {{x_{i,t}} - {x_{i,t + 1}}} \| \\
\nonumber
& \le \| {{\mu _i}} \|\| {{x_{i,t}} - {z_{i,t + 1}}} \| + \| {{\mu _i}} \|\| {{z_{i,t + 1}} - {x_{i,t + 1}}} \| \\
& \le \| {{\mu _i}} \|\| {{x_{i,t}} - {z_{i,t + 1}}} \|
+ \frac{1}{{a{F_2}{\alpha _{t + 1}}}}{\| {\varepsilon _{i,t}^x} \|^2} + \frac{{a{F_2}{\alpha _{t + 1}}}}{4}{\| {{\mu _i}} \|^2}. \label{lemma4-proof-eq11}
\end{flalign}
From \eqref{Algorithm1-eq1} and ${\sum\nolimits_{i = 1}^n {[ {{W_t}} ]} _{ij}} = {\sum\nolimits_{j = 1}^n {[ {{W_t}} ]} _{ij}} = 1$, we have
\begin{flalign}
\nonumber
& \;\;\;\;\;\sum\limits_{i = 1}^n {\| {{x_{i,t}} - {z_{i,t + 1}}} \|} \\
\nonumber
& \le \sum\limits_{i = 1}^n {( {\| {{x_{i,t}} - {{\bar x}_t}} \| + \| {{{\bar x}_t} - {z_{i,t + 1}}} \|} )} \\
\nonumber
& \le \sum\limits_{i = 1}^n {( {\| {{x_{i,t}} - {{\hat x}_{i,t}} + {{\hat x}_{i,t}} - {{\bar x}_t}} \| + \| {{{\bar x}_t} - \sum\limits_{j = 1}^n {{{[ {{W_t}} ]}_{ij}}{{\hat x}_{j,t}}} } \|} )} \\
\nonumber
& \le \sum\limits_{i = 1}^n {( {\| {{{\hat x}_{i,t}} - {x_{i,t}}} \| + \| {{{\hat x}_{i,t}} - {{\bar x}_t}} \|} )} + \sum\limits_{i = 1}^n {\sum\limits_{j = 1}^n {{{[ {{W_t}} ]}_{ij}}\| {{{\bar x}_t} - {{\hat x}_{j,t}} } \|} } \\
& \le 2\sum\limits_{i = 1}^n {\| {{{\hat x}_{i,t}} - {{\bar x}_t}} \|}  + \sum\limits_{i = 1}^n {\tau _t}. \label{lemma4-proof-eq12}
\end{flalign}
From \eqref{lemma4-proof-eq10}--\eqref{lemma4-proof-eq12}, for any ${\mu _i} \in {\mathbb{R}^{{m_i}}}$ and $a > 0$, we have
\begin{flalign}
\nonumber
& \;\;\;\;\;\sum\limits_{t = 1}^T {\sum\limits_{i = 1}^n {{F_2}\| {{\mu _i}} \|\| {{x_{i,t}} - {x_{i,t + 1}}} \|} } \\
\nonumber
& \le \sum\limits_{t = 1}^T {\sum\limits_{i = 1}^n {2{F_2}\| {{\mu _i}} \|\| {{\hat{x}_{i,t}} - {{\bar x}_t}} \|} } + {F_2}\sum\limits_{t = 1}^T {\sum\limits_{i = 1}^n {{\tau _t}\| {{\mu _i}} \|} } \\
\nonumber
&\;\;  + \sum\limits_{t = 1}^T {\sum\limits_{i = 1}^n {\frac{1}{{a{\alpha _{t + 1}}}}{{\| {\varepsilon _{i,t}^x} \|}^2}} }  + \sum\limits_{t = 1}^T {\sum\limits_{i = 1}^n {\frac{{aF_2^2{\alpha _{t + 1}}}}{4}{{\| {{\mu _i}} \|}^2}} } 
\end{flalign}
\begin{flalign}
\nonumber
& \le 2{F_2}{\varpi _2}\sum\limits_{i = 1}^n {\| {{\mu _i}} \|}  + ( {2{\varpi _4} + 1} ){F_2}\sum\limits_{t = 1}^T {\sum\limits_{i = 1}^n {{\tau _t}\| {{\mu _i}} \|} } \\
& \;\;+ \sum\limits_{t = 1}^T {\sum\limits_{i = 1}^n {a{\varpi _5}{\alpha _{t + 1}}{{\| {{\mu _i}} \|}^2}} } + \sum\limits_{t = 1}^T {\sum\limits_{i = 1}^n {\frac{3}{{a{\alpha _{t + 1}}}}{{\| {\varepsilon _{i,t}^x} \|}^2}} }. \label{lemma4-proof-eq13}
\end{flalign}

Choosing $\| {{\mu _i}} \| = 1$ in \eqref{lemma4-proof-eq10} and \eqref{lemma4-proof-eq13} yields
\begin{flalign}
\nonumber
& \;\;\;\;\;\sum\limits_{t = 1}^T {\sum\limits_{i = 1}^n {2{F_2}\| {{{\hat x}_{i,t}} - {{\bar x}_t}} \|} } \\
\nonumber
& \le 2n{F_2}{\varpi _2} + 2n{F_2}{\varpi _4}\sum\limits_{t = 1}^T {{\tau _t}} \\
&  \;\;+ \sum\limits_{t = 2}^T {\sum\limits_{i = 1}^n {( {2a{F_2}{\varpi _3}{\alpha _t} + \frac{2}{{a{\alpha _t}}}{{\| {\varepsilon _{i,t - 1}^x} \|}^2}} )} }, \label{lemma4-proof-eq14}
\end{flalign}
and
\begin{flalign}
\nonumber
& \;\;\;\;\;\sum\limits_{t = 1}^T {\sum\limits_{i = 1}^n {{F_2}\left\| {{x_{i,t}} - {x_{i,t + 1}}} \right\|} } \\
\nonumber
& \le 2n{F_2}{\varpi _2} + ( {2{\varpi _4} + 1} )n{F_2}\sum\limits_{t = 1}^T {{\tau _t}} + \sum\limits_{t = 1}^T {an{\varpi _5}{\alpha _{t + 1}}} \\
&  \;\;+ \sum\limits_{t = 1}^T {\sum\limits_{i = 1}^n {\frac{3}{{a{\alpha _{t + 1}}}}{{\left\| {\varepsilon _{i,t}^x} \right\|}^2}} }. \label{lemma4-proof-eq15}
\end{flalign}

From \eqref{lemma4-proof-eq14} and \eqref{lemma4-proof-eq15}, and choosing $a = 10$ yields
\begin{flalign}
\nonumber
& \;\;\;\;\;\sum\limits_{t = 1}^T {\sum\limits_{i = 1}^n {{{\tilde \Delta }_{i,t + 1}}( {{\mathbf{0}_{{m_i}}}} )} } \\
\nonumber
& \le 4n{F_2}{\varpi _2} + ( {4{\varpi _4} + 1} )n{F_2}\sum\limits_{t = 1}^T {{\tau _t}} + \sum\limits_{t = 2}^T {20n{F_2}{\varpi _3}{\alpha _t}}
\\
\nonumber
& \;\;+ \sum\limits_{t = 1}^T {10n{\varpi _5}{\alpha _{t + 1}}} + \sum\limits_{t = 1}^T {\sum\limits_{i = 1}^n {\frac{3}{{10{\alpha _{t + 1}}}}} } {\| {\varepsilon _{i,t}^x} \|^2} \\
\nonumber
& \;\;+ \sum\limits_{t = 2}^T {\sum\limits_{i = 1}^n {\frac{2}{{10{\alpha _t}}}} } {\| {\varepsilon _{i,t - 1}^x} \|^2} - \sum\limits_{t = 1}^T {\sum\limits_{i = 1}^n {\frac{1}{{2{\alpha _{t + 1}}}}} } {\| {\varepsilon _{i,t}^x} \|^2} \\
& \le 4n{F_2}{\varpi _2} + ( {4{\varpi _4} + 1} )n{F_2}\sum\limits_{t = 1}^T {{\tau _t}}  + \sum\limits_{t = 1}^T {10n{\varpi _6}{\alpha _t}}. \label{lemma4-proof-eq16}
\end{flalign}

Combining \eqref{lemma4-proof-eq4}, \eqref{lemma4-proof-eq6} and \eqref{lemma4-proof-eq16} yields \eqref{lemma4-eq1}.

($\mathbf{ii}$)
We first provide a loose bound for network cumulative constraint violation.

We have
\begin{flalign}
\nonumber
& \;\;\;\;\;\mu _i^T{[ {{g_{i,t}}( {{x_{i,t}}} )} ]_ + } \\
\nonumber
& = \mu _i^T{[ {{g_{i,t}}( {{x_{j,t}}} )} ]_ + } + \mu _i^T{[ {{g_{i,t}}( {{x_{i,t}}} )} ]_ + } - \mu _i^T{[ {{g_{i,t}}( {{x_{j,t}}} )} ]_ + } \\
\nonumber
& \ge \mu _i^T{[ {{g_{i,t}}( {{x_{j,t}}} )} ]_ + } - \| {{\mu _i}} \|\| {{{[ {{g_{i,t}}( {{x_{i,t}}} )} ]}_ + } - {{[ {{g_{i,t}}( {{x_{j,t}}} )} ]}_ + }} \| \\
\nonumber
& \ge \mu _i^T{[ {{g_{i,t}}( {{x_{j,t}}} )} ]_ + } - \| {{\mu _i}} \|\| {{g_{i,t}}( {{x_{i,t}}} ) - {g_{i,t}}( {{x_{j,t}}} )} \| \\
\nonumber
& \ge \mu _i^T{[ {{g_{i,t}}( {{x_{j,t}}} )} ]_ + } - {F_2}\| {{\mu _i}} \|\| {{x_{i,t}} - {x_{j,t}}} \| \\
\nonumber
& \ge \mu _i^T{[ {{g_{i,t}}( {{x_{j,t}}} )} ]_ + } \\
\nonumber
& \;\;- {F_2}\| {{\mu _i}} \|\| {{{\hat x}_{i,t}} - {x_{i,t}}} \| - {F_2}\| {{\mu _i}} \|\| {{{\hat x}_{i,t}} - {{\bar x}_t}} \| \\
& \;\;- {F_2}\| {{\mu _i}} \|\| {{{\hat x}_{j,t}} - {x_{j,t}}} \| - {F_2}\| {{\mu _i}} \|\| {{{\hat x}_{j,t}} - {{\bar x}_t}} \|, \label{lemma4-proof-eq17}
\end{flalign}
where the second inequality holds since projection operator is non-expansive, and the third inequality holds due to \eqref{lemma3-proof-eq1b}.

Combining \eqref{lemma4-proof-eq2} and \eqref{lemma4-proof-eq17}, setting ${y_t} = y$, and summing over $j \in [ n ]$ yields
\begin{flalign}
\nonumber
& \;\;\;\;\;\sum\limits_{i = 1}^n {\big( {{\Delta _{i,t + 1}}( {{\mu _i}} ) + \frac{1}{n}\sum\limits_{j = 1}^n {\mu _i^T{{[ {{g_{i,t}}( {{x_{j,t}}} )} ]}_ + }}  - \frac{1}{2}{\beta _{t + 1}}{{\| {{\mu _i}} \|}^2}} \big)} \\
\nonumber
& \;\;+ \sum\limits_{i = 1}^n {{f_t}( {{x_{i,t}}} )}  - n{f_t}( y ) \\
\nonumber
& \le n{\varpi _1}{\gamma _{t + 1}} + \sum\limits_{i = 1}^n {{{\hat \Delta }_{i,t + 1}}( {{\mu _i}} )}  + \frac{1}{n}{\check{\Delta} _t} + 2n{F_2}{\tau _t} + \frac{{2nR( \mathbb{X} ){\tau _{t + 1}}}}{{{\alpha _{t + 1}}}} \\
& \;\;+ \frac{1}{{2{\alpha _{t + 1}}}}\sum\limits_{i = 1}^n {( {{{\| {y - {z_{i,t + 1}}} \|}^2} - \| {y - {z_{i,t + 2}}} \|} )}, \label{lemma4-proof-eq18}
\end{flalign}
where
\begin{flalign}
\nonumber
{{\hat \Delta }_{i,t + 1}}( {{\mu _i}} ) &= {{\tilde \Delta }_{i,t + 1}}( {{\mu _i}} ) + {F_2}\| {{\mu _i}} \|\| {{{\hat x}_{i,t}} - {{\bar x}_t}} \| + {F_2}\| {{\mu _i}} \|{\tau _t}, \\
\nonumber
{\check{\Delta} _t} &= \sum\limits_{i = 1}^n {\sum\limits_{j = 1}^n {{F_2}\| {{\mu _i}} \|\| {{{\hat x}_{j,t}} - {{\bar x}_t}} \|} }  + \sum\limits_{i = 1}^n {n{F_2}\| {{\mu _i}} \|{\tau _t}}.
\end{flalign}

Combining \eqref{lemma4-proof-eq10}, \eqref{lemma4-proof-eq13}--\eqref{lemma4-proof-eq15}, and choosing $a = 20$ yields
\begin{flalign}
\nonumber
& \;\;\;\;\;\sum\limits_{t = 1}^T {\sum\limits_{i = 1}^n {{{\hat \Delta }_{i,t + 1}}( {{\mu _i}} )} } \\
\nonumber
& \le 4n{F_2}{\varpi _2} + 20n{\varpi _6}\sum\limits_{t = 1}^T {{\alpha _t}}  + ( {4{\varpi _4} + 1} )n{F_2}\sum\limits_{t = 1}^T {{\tau _t}} \\
\nonumber
& \;\;+ 3{F_2}{\varpi _2}\sum\limits_{i = 1}^n {\| {{\mu _i}} \|}  + 20( {{F_2}{\varpi _3} + {\varpi _5}} )\sum\limits_{t = 1}^T {\sum\limits_{i = 1}^n {{\alpha _t}{{\| {{\mu _i}} \|}^2}} } \\
& \;\;+ ( {3{\varpi _4} + 2} ){F_2}\sum\limits_{t = 1}^T {\sum\limits_{i = 1}^n {{\tau _t}\| {{\mu _i}} \|} }  - \sum\limits_{t = 1}^T {\sum\limits_{i = 1}^n {\frac{1}{{20{\alpha _{t + 1}}}}} } {\| {\varepsilon _{i,t}^x} \|^2}. \label{lemma4-proof-eq19}
\end{flalign}

From \eqref{lemma4-proof-eq9}, we have
\begin{flalign}
\nonumber
& \;\;\;\;\;\sum\limits_{t = 1}^T {\sum\limits_{i = 1}^n {\sum\limits_{j = 1}^n {\| {{\mu _j}} \|\| {{{\hat x}_{i,t}} - {{\bar x}_t}} \|} } } \\
\nonumber
& \le n{\varpi _2}\sum\limits_{j = 1}^n {\| {{\mu _j}} \|}  + 2\sum\limits_{t = 2}^T {\sum\limits_{i = 1}^n {\sum\limits_{j = 1}^n {\| {\varepsilon _{i,t - 1}^x} \|\| {{\mu _j}} \|} } } \\
\nonumber
&  \;\;+ \frac{{n\tau }}{{1 - \lambda }}\sum\limits_{t = 2}^T {\sum\limits_{i = 1}^n {\sum\limits_{j = 1}^n {\| {\varepsilon _{i,t - 1}^x} \|\| {{\mu _j}} \|} } }  + n{\varpi _4}\sum\limits_{t = 1}^T {\sum\limits_{j = 1}^n {{\tau _t}\| {{\mu _j}} \|} } \\
\nonumber
& \le n{\varpi _2}\sum\limits_{j = 1}^n {\| {{\mu _j}} \|} \\
\nonumber
&  \;\;+ \sum\limits_{t = 2}^T {\sum\limits_{i = 1}^n {\sum\limits_{j = 1}^n {( {\frac{1}{{2a{F_2}{\alpha _t}}}{{\| {\varepsilon _{i,t - 1}^x} \|}^2} + 2a{F_2}{\alpha _t}{{\| {{\mu _j}} \|}^2}} )} } } \\
\nonumber
&  \;\;+ \sum\limits_{t = 2}^T {\sum\limits_{i = 1}^n {\sum\limits_{j = 1}^n {( {\frac{1}{{2a{F_2}{\alpha _t}}}{{\| {\varepsilon _{i,t - 1}^x} \|}^2} + \frac{{a{n^2}{F_2}{\tau ^2}{\alpha _t}}}{{2{{( {1 - \lambda } )}^2}}}{{\| {{\mu _j}} \|}^2}} )} } } 
\end{flalign}
\begin{flalign}
\nonumber
& \;\;+ n{\varpi _4}\sum\limits_{t = 1}^T {\sum\limits_{j = 1}^n {{\tau _t}\| {{\mu _j}} \|} } \\
\nonumber
& = n{\varpi _2}\sum\limits_{i = 1}^n {\| {{\mu _i}} \|}  + \sum\limits_{t = 2}^T {\sum\limits_{i = 1}^n {an{\varpi _3}{\alpha _t}{{\| {{\mu _i}} \|}^2}} } \\
& \;\;+ \sum\limits_{t = 2}^T {\sum\limits_{i = 1}^n {\frac{n}{{a{F_2}{\alpha _t}}}} } {\| {\varepsilon _{i,t - 1}^x} \|^2} + n{\varpi _4}\sum\limits_{t = 1}^T {\sum\limits_{i = 1}^n {{\tau _t}\| {{\mu _i}} \|} }. \label{lemma4-proof-eq20}
\end{flalign}

Choosing $a = 20$ in \eqref{lemma4-proof-eq20} yields
\begin{flalign}
\nonumber
&\;\;\;\;\;\frac{1}{n}\sum\limits_{t = 1}^T {{\check{\Delta} _t}} \\
\nonumber
&\le {F_2}{\varpi _2}\sum\limits_{i = 1}^n {\| {{\mu _i}} \|}  + \sum\limits_{t = 1}^T {\sum\limits_{i = 1}^n {20{F_2}{\varpi _3}{\alpha _t}{{\| {{\mu _i}} \|}^2}} } \\
&  \;\;+ \sum\limits_{t = 2}^T {\sum\limits_{i = 1}^n {\frac{1}{{20{\alpha _t}}}{{\| {\varepsilon _{i,t - 1}^x} \|}^2}} }  + {F_2}{\varpi _8}\sum\limits_{t = 1}^T {\sum\limits_{i = 1}^n {{\tau _t}\| {{\mu _i}} \|} }. \label{lemma4-proof-eq21}
\end{flalign}

Let ${h_{ij}}:\mathbb{R}_ + ^{{m_i}} \to \mathbb{R}$ be a function defined as
\begin{flalign}
\nonumber
{h_{ij}}( {{\mu _i}} ) &= \mu _i^T\sum\limits_{t = 1}^T {{{[ {{g_{i,t}}( {{x_{j,t}}} )} ]}_ + }} \\
& \;\;- \frac{1}{2}{\| {{\mu _i}} \|^2}\big( {\frac{1}{{{\gamma _1}}} + \sum\limits_{t = 1}^T {( {{\beta _t} + {\varpi _9}{\alpha _t}} )} } \big). \label{lemma4-proof-eq22}
\end{flalign}

From \eqref{lemma4-proof-eq3}, \eqref{lemma4-proof-eq5}, \eqref{lemma4-proof-eq19}, \eqref{lemma4-proof-eq21}, and \eqref{lemma4-proof-eq22}, summing \eqref{lemma4-proof-eq18} over $t \in [ T ]$ gives
\begin{flalign}
\nonumber
& \;\;\;\;\;\frac{1}{2}\sum\limits_{t = 1}^T {\sum\limits_{i = 1}^n {( {\frac{1}{{{\gamma _t}}} - \frac{1}{{{\gamma _{t + 1}}}} + {\beta _{t + 1}}} ){{\| {{q_{i,t}} - {\mu _i}} \|}^2}} } \\
\nonumber
&  \;\;+ \frac{1}{n}\sum\limits_{i = 1}^n {\sum\limits_{j = 1}^n {{h_{ij}}( {{\mu _i}} )} } + n{{\rm{Net}\mbox{-}\rm{Reg}}( {\{ {{x_{i,t}}} \},{y_{[T]}}} )} \\
\nonumber
&  \le 4n{F_2}{\varpi _2} + n{\varpi _1}\sum\limits_{t = 1}^T {{\gamma _t}}  + 20n{\varpi _6}\sum\limits_{t = 1}^T {{\alpha _t}}  + n{\varpi _{10}}\sum\limits_{t = 1}^T {{\tau _t}} \\
\nonumber
&  \;\;+ 4{F_2}{\varpi _2}\sum\limits_{i = 1}^n {\| {{\mu _i}} \|}  + {\varpi _{10}}\sum\limits_{t = 1}^T {\sum\limits_{i = 1}^n {{\tau _t}\| {{\mu _i}} \|} } \;\;\;\;\;\;\;\;\;\;\;\;\;\;\\
&  \;\;+ 2nR( \mathbb{X} )\sum\limits_{t = 1}^T {\frac{{{\tau _{t + 1}}}}{{{\alpha _{t + 1}}}}}  + \frac{{2nR{{( \mathbb{X} )}^2}}}{{{\alpha _{T + 1}}}}. \label{lemma4-proof-eq23}
\end{flalign}

Substituting ${\mu _i} = \mu _{ij}^0$ into \eqref{lemma4-proof-eq22} yields
\begin{flalign}
{h_{ij}}\left( {\mu _{ij}^0} \right) = \frac{{{{\| {\sum\nolimits_{t = 1}^T {{{[ {{g_{i,t}}( {{x_{j,t}}} )} ]}_ + }} } \|}^2}}}{{2\big( {\frac{1}{{{\gamma _1}}} + \sum\nolimits_{t = 1}^T {( {{\beta _t} + {\varpi _9}{\alpha _t}} )} } \big)}}. \label{lemma4-proof-eq24}
\end{flalign}

From ${g_t}( x ) = {\rm{col}}\big( {{g_{1,t}}( x ), \cdot  \cdot  \cdot ,{g_{n,t}}( x )} \big)$, we have
\begin{flalign}
\sum\limits_{i = 1}^n {\sum\limits_{j = 1}^n {{{\| \sum\limits_{t = 1}^T {{{[ {{g_{i,t}}( {{x_{j,t}}} )} ]}_ + }} } \|^2}} }  = \sum\limits_{j = 1}^n {{\| {\sum\limits_{t = 1}^T {{{[ {{g_t}( {{x_{j,t}}} )} ]}_ + }} } \|^2}}. \label{lemma4-proof-eq25}
\end{flalign}

From \eqref{ass-eq2a}, we have
\begin{flalign}
-{{\rm{Net}\mbox{-}\rm{Reg}}( {\{ {{x_{i,t}}} \},{y_{[T]}}} )} \le {F_1}T. \label{lemma4-proof-eq26}
\end{flalign}

From \eqref{ass-eq2b}, we have
\begin{flalign}
\| {\mu _{ij}^0} \| \le \frac{{{F_1}T}}{{\frac{1}{{{\gamma _1}}} + \sum\nolimits_{t = 1}^T {( {{\beta _t} + {\varpi _9}{\alpha _t}} )} }}. \label{lemma4-proof-eq27}
\end{flalign}

Substituting ${\mu _i} = \mu _{ij}^0$ into \eqref{lemma4-proof-eq23}, combining \eqref{lemma4-proof-eq24}--\eqref{lemma4-proof-eq27} yields \eqref{lemma4-eq2}.
\end{proof}

\section{Proof of Theorem 1}
Based on Lemma~4, we are now ready to prove Theorem~1.

($\mathbf{i}$)
For any constant $a \in \left[ {0,1} \right)$ and $T \in {\mathbb{N}_ + }$, it holds that
\begin{flalign}
\sum\limits_{t = 1}^T {\frac{1}{{{t^a}}}}  \le 1 + \int\limits_1^T {\frac{1}{{{t^a}}}} dt = \frac{{{T^{1 - a}} - a}}{{1 - a}} \le \frac{{{T^{1 - a}}}}{{1 - a}}. \label{theorem1-proof-eq1}
\end{flalign}

Form \eqref{theorem1-proof-eq1}, we have
\begin{flalign}
\sum\limits_{t = 1}^T {\sqrt {\frac{{{\Psi _t}}}{t}} }  \le \sqrt {{\Psi _T}} \sum\limits_{t = 1}^T {\frac{1}{{\sqrt t }}}  \le 2\sqrt {T{\Psi _T}}. \label{theorem1-proof-eq2}
\end{flalign}

From Cauchy--Schwarz inequality, we have
\begin{flalign}
\sum\limits_{t = 1}^T {\frac{{{\tau _{t + 1}}}}{{\sqrt {\frac{{{\Psi _{t + 1}}}}{{t + 1}}} }}}  \le \sum\limits_{t = 1}^T {\sqrt {{\tau _{t + 1}}} }  \le \sum\limits_{t = 1}^T {\sqrt {{\tau _t}} }  \le \sqrt {T{\Psi _T}}. \label{theorem1-proof-eq3}
\end{flalign}

From \eqref{theorem1-eq1}, we have
\begin{flalign}
\frac{t}{{{t^\kappa }}} - \frac{{t + 1}}{{{{\left( {t + 1} \right)}^\kappa }}} + \frac{1}{{{(t+1)^\kappa }}} = \frac{{t}}{{{t^\kappa }}} - \frac{{t}}{{{{( {t + 1} )}^\kappa }}} > 0. \label{theorem1-proof-eq4}
\end{flalign}

Combining \eqref{theorem1-eq1}, \eqref{lemma4-eq1}, \eqref{theorem1-proof-eq1}--\eqref{theorem1-proof-eq4} yields
\begin{flalign}
\nonumber
& \;\;\;\;\;{{\rm{Net}\mbox{-}\rm{Reg}}( {\{ {{x_{i,t}}} \},{y_{[ T ]}}} )} \\
\nonumber
& \le 4{F_2}{\varpi _2} + \frac{{{\varpi _1}}}{\kappa }{T^\kappa } + 20{\varpi _6}\sqrt {T{\Psi _T}}  + {\varpi _7}{\Psi _T} \\
\nonumber
&\;\;  + 2R( \mathbb{X} )\sqrt {T{\Psi _T}}  + 2\sqrt 2 R{( \mathbb{X} )^2}\sqrt {\frac{T}{{{\Psi _T}}}} \\
&\;\;  + 2R( \mathbb{X} )\sqrt {\frac{T}{{{\Psi _T}}}} {P_T}, \label{theorem1-proof-eq5}
\end{flalign}
which gives \eqref{theorem1-eq2}.

($\mathbf{ii}$)
From Cauchy--Schwarz inequality, we have
\begin{flalign}
{\big( {\frac{1}{n}\sum\limits_{i = 1}^n {\| {\sum\limits_{t = 1}^T {{{[ {{g_t}( {{x_{i,t}}} )} ]}_ + }} } \|} } \big)^2}
\le \frac{1}{n}\sum\limits_{i = 1}^n {{\| {\sum\limits_{t = 1}^T {{{[ {{g_t}( {{x_{i,t}}} )} ]}_ + }} } \|^2}}. \label{theorem1-proof-eq6}
\end{flalign}
Combining \eqref{theorem1-eq1}, \eqref{lemma4-eq2}, \eqref{theorem1-proof-eq1}--\eqref{theorem1-proof-eq4} yields
\begin{flalign}
\nonumber
& \;\;\;\;\;\frac{1}{n}\sum\limits_{i = 1}^n {{\| {\sum\limits_{t = 1}^T {{{[ {{g_t}( {{x_{i,t}}} )} ]}_ + }} } \|^2}} \\
\nonumber
& \le 8n{F_1}{F_2}{\varpi _2}T + 2n{F_1}{\varpi _{10}}T{\Psi _T} \\
\nonumber
&\;\;  + 2n(1 + \frac{{{T^{1 - \kappa }}}}{{1 - \kappa }} + 2{\varpi _9}\sqrt {T{\Psi _T}} )\big({F_1}T + 4{F_2}{\varpi _2} \\
\nonumber
&\;\;  + \frac{{{\varpi _1}}}{\kappa }{T^\kappa } + 40{\varpi _6}\sqrt {T{\Psi _T}}  + {\varpi _{10}}{\Psi _T} \\
&\;\;  + 2R( \mathbb{X} )\sqrt {T{\Psi _T}}  + 2\sqrt 2 R{( \mathbb{X} )^2}\sqrt {\frac{T}{{{\Psi _T}}}} \big). \label{theorem1-proof-eq7}
\end{flalign}

Combining \eqref{theorem1-proof-eq6}, \eqref{theorem1-proof-eq7} and
\begin{flalign}
\nonumber
& \;\;\;\;\;\sum\limits_{t = 1}^T {\| {{{[ {{g_t}( {{x_{i,t}}} )} ]}_ + }} \|}  \le {\sum\limits_{t = 1}^T {\| {{{[ {{g_t}( {{x_{i,t}}} )} ]}_ + }} \|} _1} \\
&  = {\| {\sum\limits_{t = 1}^T {{{[ {{g_t}( {{x_{i,t}}} )} ]}_ + }} } \|_1} \le \sqrt m \| {\sum\limits_{t = 1}^T {{{[ {{g_t}( {{x_{i,t}}} )} ]}_ + }} } \| \label{theorem1-proof-eq8}
\end{flalign}
yields \eqref{theorem1-eq3}.

\section{Proof of Theorem 2}
For any $T \ge 3$, it holds that
\begin{flalign}
\sum\limits_{t = 1}^T {\frac{1}{t}}  \le 1 + \int\limits_1^T {\frac{1}{t}} dt \le 1 + \log ( T ) \le 2\log ( T ). \label{theorem2-proof-eq1}
\end{flalign}

For any constant $b > 1$ and $T \in {\mathbb{N}_ + }$, there exists a constant $M > 0$ such that
\begin{flalign}
\sum\limits_{t = 1}^T {\frac{1}{{{t^b}}}}  \le M. \label{theorem2-proof-eq2}
\end{flalign}

($\mathbf{i}$)
Combining \eqref{theorem2-eq1} with ${\theta _3} \in ( {{\theta _1},1} )$, \eqref{lemma4-eq1}, \eqref{theorem1-proof-eq1} and \eqref{theorem1-proof-eq4} yields \\
\begin{flalign}
\nonumber
& \;\;\;\;\;{{\rm{Net}\mbox{-}\rm{Reg}}( {\{ {{x_{i,t}}} \},{y_{[ T ]}}} )} \\
\nonumber
& \le 4{F_2}{\varpi _2} + \frac{{{\varpi _1}{T^{{\theta _2}}}}}{{{\theta _2}}} + \frac{{10{\varpi _6}{\alpha _0}{T^{1 - {\theta _1}}}}}{{1 - {\theta _1}}} + \frac{{4R( \mathbb{X} ){\tau _0}{T^{1 + {\theta _1} - {\theta _3}}}}}{{( {1 + {\theta _1} - {\theta _3}} ){\alpha _0}}}  \\
&\;\; + \frac{{{\varpi _7}{\tau _0}{T^{1 - {\theta _3}}}}}{{1 - {\theta _3}}} + \frac{{4R{{( \mathbb{X} )}^2}{T^{{\theta _1}}}}}{{{\alpha _0}}} + \frac{{2R( \mathbb{X} ){T^{{\theta _1}}}{P_T}}}{{{\alpha _0}}}. \label{theorem2-proof-eq3}
\end{flalign}
Combining \eqref{theorem2-eq1} with ${\theta _3} = 1$, \eqref{lemma4-eq1}, \eqref{theorem1-proof-eq1}, \eqref{theorem1-proof-eq4} and \eqref{theorem2-proof-eq1} yields
\begin{flalign}
\nonumber
& \;\;\;\;\;{{\rm{Net}\mbox{-}\rm{Reg}}( {\{ {{x_{i,t}}} \},{y_{[ T ]}}} )} \\
\nonumber
& \le 4{F_2}{\varpi _2} + \frac{{{\varpi _1}{T^{{\theta _2}}}}}{{{\theta _2}}} + \frac{{10{\varpi _6}{\alpha _0}{T^{1 - {\theta _1}}}}}{{1 - {\theta _1}}} + \frac{{4R( \mathbb{X} ){\tau _0}{T^{{\theta _1}}}}}{{{\theta _1}{\alpha _0}}} \\
&\;\;  + 2{\varpi _7}{\tau _0}\log ( T ) + \frac{{4R{{( \mathbb{X} )}^2}{T^{{\theta _1}}}}}{{{\alpha _0}}} + \frac{{2R( \mathbb{X} ){T^{{\theta _1}}}{P_T}}}{{{\alpha _0}}}. \label{theorem2-proof-eq4}
\end{flalign}
Combining \eqref{theorem2-eq1} with $1 < {\theta _3} < 1 + {\theta _1}$, \eqref{lemma4-eq1}, \eqref{theorem1-proof-eq1}, \eqref{theorem1-proof-eq4}, and \eqref{theorem2-proof-eq2} yields
\begin{flalign}
\nonumber
& \;\;\;\;\;{{\rm{Net}\mbox{-}\rm{Reg}}( {\{ {{x_{i,t}}} \},{y_{[ T ]}}} )} \\
\nonumber
& \le 4{F_2}{\varpi _2} + \frac{{{\varpi _1}{T^{{\theta _2}}}}}{{{\theta _2}}} + \frac{{10{\varpi _6}{\alpha _0}{T^{1 - {\theta _1}}}}}{{1 - {\theta _1}}} + \frac{{4R( \mathbb{X} ){\tau _0}{T^{1 + {\theta _1} - {\theta _3}}}}}{{\left( {1 + {\theta _1} - {\theta _3}} \right){\alpha _0}}} \\
&\;\; + {\varpi _7}{\tau _0}M + \frac{{4R{{( \mathbb{X} )}^2}{T^{{\theta _1}}}}}{{{\alpha _0}}} + \frac{{2R( \mathbb{X} ){T^{{\theta _1}}}{P_T}}}{{{\alpha _0}}}. \label{theorem2-proof-eq5}
\end{flalign}
Combining \eqref{theorem2-eq1} with ${\theta _3} = 1 + {\theta _1}$, \eqref{lemma4-eq1}, \eqref{theorem1-proof-eq1}, \eqref{theorem1-proof-eq4}, \eqref{theorem2-proof-eq1} and \eqref{theorem2-proof-eq2} yields
\begin{flalign}
\nonumber
& \;\;\;\;\;{{\rm{Net}\mbox{-}\rm{Reg}}( {\{ {{x_{i,t}}} \},{y_{[ T ]}}} )} \\
\nonumber
& \le 4{F_2}{\varpi _2} + \frac{{{\varpi _1}{T^{{\theta _2}}}}}{{{\theta _2}}} + \frac{{10{\varpi _6}{\alpha _0}{T^{1 - {\theta _1}}}}}{{1 - {\theta _1}}} + \frac{{4R( \mathbb{X} ){\tau _0}\log ( T )}}{{{\alpha _0}}} \\
\nonumber
&\;\; + \frac{{4R( \mathbb{X} ){\tau _0}\log 2}}{{{\alpha _0}}} + {\varpi _7}{\tau _0}M + \frac{{4R{{( \mathbb{X} )}^2}{T^{{\theta _1}}}}}{{{\alpha _0}}} \\
&\;\; + \frac{{2R( \mathbb{X} ){T^{{\theta _1}}}{P_T}}}{{{\alpha _0}}}. \label{theorem2-proof-eq6}
\end{flalign}
Combining \eqref{theorem2-eq1} with ${\theta _3} > 1 + {\theta _1}$, \eqref{lemma4-eq1}, \eqref{theorem1-proof-eq1}, \eqref{theorem1-proof-eq4}, and \eqref{theorem2-proof-eq2} yields
\begin{flalign}
\nonumber
& \;\;\;\;\;{{\rm{Net}\mbox{-}\rm{Reg}}( {\{ {{x_{i,t}}} \},{y_{[ T ]}}} )} \\
\nonumber
& \le 4{F_2}{\varpi _2} + \frac{{{\varpi _1}{T^{{\theta _2}}}}}{{{\theta _2}}} + \frac{{10{\varpi _6}{\alpha _0}{T^{1 - {\theta _1}}}}}{{1 - {\theta _1}}} + \frac{{2R( \mathbb{X} ){\tau _0}M}}{{{\alpha _0}}} \\
&\;\; + {\varpi _7}{\tau _0}M + \frac{{4R{{( \mathbb{X} )}^2}{T^{{\theta _1}}}}}{{{\alpha _0}}} + \frac{{2R( \mathbb{X} ){T^{{\theta _1}}}{P_T}}}{{{\alpha _0}}}. \label{theorem2-proof-eq7}
\end{flalign}

From \eqref{theorem2-proof-eq3}--\eqref{theorem2-proof-eq7}, we have \eqref{theorem2-eq2}.

($\mathbf{ii}$)
Combining \eqref{theorem2-eq1} with ${\theta _3} \in ( {{\theta _1},1} )$, \eqref{lemma4-eq2}, \eqref{theorem1-proof-eq1} and \eqref{theorem1-proof-eq4} yields
\begin{flalign}
\nonumber
& \;\;\;\;\;\frac{1}{n}\sum\limits_{i = 1}^n {{\| {\sum\limits_{t = 1}^T {{{[ {{g_t}( {{x_{i,t}}} )} ]}_ + }} } \|^2}} \\
\nonumber
& \le 8n{F_1}{F_2}{\varpi _2}T + \frac{{2n{F_1}{\varpi _{10}}{\tau _0}{T^{2 - {\theta _3}}}}}{{1 - {\theta _3}}} \\
\nonumber
&  \;\; + 2n( {1 + \frac{{{T^{1 - {\theta _2}}}}}{{1 - {\theta _2}}} + \frac{{{\varpi _9}{\alpha _0}{T^{1 - {\theta _1}}}}}{{1 - {\theta _1}}}} )( {F_1}T + 4{F_2}{\varpi _2} \\
\nonumber
&  \;\; + \frac{{{\varpi _1}{T^{{\theta _2}}}}}{{{\theta _2}}} + \frac{{20{\varpi _6}{\alpha _0}{T^{1 - {\theta _1}}}}}{{1 - {\theta _1}}} + \frac{{{\varpi _{10}}{\tau _0}{T^{1 - {\theta _3}}}}}{{1 - {\theta _3}}} \\
&  \;\; + \frac{{4R( \mathbb{X} ){\tau _0}{T^{1 + {\theta _1} - {\theta _3}}}}}{{( {1 + {\theta _1} - {\theta _3}} ){\alpha _0}}} + \frac{{4R{{( \mathbb{X} )}^2}{T^{{\theta _1}}}}}{{{\alpha _0}}}). \label{theorem2-proof-eq8}
\end{flalign}
Combining \eqref{theorem2-eq1} with ${\theta _3} = 1$, \eqref{lemma4-eq2}, \eqref{theorem1-proof-eq1}, \eqref{theorem1-proof-eq4} and \eqref{theorem2-proof-eq1} yields
\begin{flalign}
\nonumber
& \;\;\;\;\;\frac{1}{n}\sum\limits_{i = 1}^n {{\| {\sum\limits_{t = 1}^T {{{[ {{g_t}( {{x_{i,t}}} )} ]}_ + }} } \|^2}} \\
\nonumber
& \le 8n{F_1}{F_2}{\varpi _2}T + 4n{F_1}{\varpi _{10}}{\tau _0}T\log ( T ) \\
\nonumber
&  \;\; + 2n( {1 + \frac{{{T^{1 - {\theta _2}}}}}{{1 - {\theta _2}}} + \frac{{{\varpi _9}{\alpha _0}{T^{1 - {\theta _1}}}}}{{1 - {\theta _1}}}})( {F_1}T + 4{F_2}{\varpi _2} \\
\nonumber
&  \;\; + \frac{{{\varpi _1}{T^{{\theta _2}}}}}{{{\theta _2}}} + \frac{{20{\varpi _6}{\alpha _0}{T^{1 - {\theta _1}}}}}{{1 - {\theta _1}}} + 2{\varpi _{10}}{\tau _0}\log ( T ) \\
&  \;\; + \frac{{4R( \mathbb{X} ){\tau _0}{T^{{\theta _1}}}}}{{{\theta _1}{\alpha _0}}} + \frac{{4R{{( \mathbb{X} )}^2}{T^{{\theta _1}}}}}{{{\alpha _0}}} ). \label{theorem2-proof-eq9}
\end{flalign}
Combining \eqref{theorem2-eq1} with $1 < {\theta _3} < 1 + {\theta _1}$, \eqref{lemma4-eq2}, \eqref{theorem1-proof-eq1}, \eqref{theorem1-proof-eq4}, and \eqref{theorem2-proof-eq2} yields
\begin{flalign}
\nonumber
& \;\;\;\;\;\frac{1}{n}\sum\limits_{i = 1}^n {{\| {\sum\limits_{t = 1}^T {{{[ {{g_t}( {{x_{i,t}}} )} ]}_ + }} } \|^2}} \\
\nonumber
& \le 8n{F_1}{F_2}{\varpi _2}T + 2n{F_1}{\varpi _{10}}{\tau _0}MT \\
\nonumber
&  \;\; + 2n( {1 + \frac{{{T^{1 - {\theta _2}}}}}{{1 - {\theta _2}}} + \frac{{{\varpi _9}{\alpha _0}{T^{1 - {\theta _1}}}}}{{1 - {\theta _1}}}} )( {F_1}T + 4{F_2}{\varpi _2} \\
\nonumber
&  \;\; + \frac{{{\varpi _1}{T^{{\theta _2}}}}}{{{\theta _2}}} + \frac{{20{\varpi _6}{\alpha _0}{T^{1 - {\theta _1}}}}}{{1 - {\theta _1}}} + {\varpi _{10}}{\tau _0}M \\
&  \;\; + \frac{{4R( \mathbb{X} ){\tau _0}{T^{1 + {\theta _1} - {\theta _3}}}}}{{( {1 + {\theta _1} - {\theta _3}} ){\alpha _0}}} + \frac{{4R{{( \mathbb{X} )}^2}{T^{{\theta _1}}}}}{{{\alpha _0}}} ). \label{theorem2-proof-eq10}
\end{flalign}
Combining \eqref{theorem2-eq1} with ${\theta _3} = 1 + {\theta _1}$, \eqref{lemma4-eq2}, \eqref{theorem1-proof-eq1}, \eqref{theorem1-proof-eq4}, \eqref{theorem2-proof-eq1} and \eqref{theorem2-proof-eq2} yields
\begin{flalign}
\nonumber
& \;\;\;\;\;\frac{1}{n}\sum\limits_{i = 1}^n {{\| {\sum\limits_{t = 1}^T {{{[ {{g_t}( {{x_{i,t}}} )} ]}_ + }} } \|^2}} \\
\nonumber
& \le 8n{F_1}{F_2}{\varpi _2}T + 2n{F_1}{\varpi _{10}}{\tau _0}MT 
\end{flalign}
\begin{flalign}
\nonumber
&  \;\; + 2n( {1 + \frac{{{T^{1 - {\theta _2}}}}}{{1 - {\theta _2}}} + \frac{{{\varpi _9}{\alpha _0}{T^{1 - {\theta _1}}}}}{{1 - {\theta _1}}}} )( {F_1}T + 4{F_2}{\varpi _2} \\
\nonumber
&  \;\; + \frac{{{\varpi _1}{T^{{\theta _2}}}}}{{{\theta _2}}} + \frac{{20{\varpi _6}{\alpha _0}{T^{1 - {\theta _1}}}}}{{1 - {\theta _1}}} + {\varpi _{10}}{\tau _0}M + \frac{{4R( \mathbb{X} ){\tau _0}\log ( T )}}{{{\alpha _0}}}\\
&  \;\; + \frac{{4R( \mathbb{X} ){\tau _0}\log 2}}{{{\alpha _0}}} + \frac{{4R{{( \mathbb{X} )}^2}{T^{{\theta _1}}}}}{{{\alpha _0}}} ). \label{theorem2-proof-eq11}
\end{flalign}
Combining \eqref{theorem2-eq1} with ${\theta _3} > 1 + {\theta _1}$, \eqref{lemma4-eq2}, \eqref{theorem1-proof-eq1}, \eqref{theorem1-proof-eq4}, and \eqref{theorem2-proof-eq2} yields
\begin{flalign}
\nonumber
& \;\;\;\;\;\frac{1}{n}\sum\limits_{i = 1}^n {{\| {\sum\limits_{t = 1}^T {{{[ {{g_t}( {{x_{i,t}}} )} ]}_ + }} } \|^2}} \\
\nonumber
& \le 8n{F_1}{F_2}{\varpi _2}T + 2n{F_1}{\varpi _{10}}{\tau _0}MT \\
\nonumber
&  \;\; + 2n( {1 + \frac{{{T^{1 - {\theta _2}}}}}{{1 - {\theta _2}}} + \frac{{{\varpi _9}{\alpha _0}{T^{1 - {\theta _1}}}}}{{1 - {\theta _1}}}} )( {F_1}T + 4{F_2}{\varpi _2} + \frac{{{\varpi _1}{T^{{\theta _2}}}}}{{{\theta _2}}} \\
&  \;\;  + \frac{{20{\varpi _6}{\alpha _0}{T^{1 - {\theta _1}}}}}{{1 - {\theta _1}}} + {\varpi _{10}}{\tau _0}M
 + \frac{{2R( \mathbb{X} ){\tau _0}M}}{{{\alpha _0}}} + \frac{{4R{{( \mathbb{X} )}^2}{T^{{\theta _1}}}}}{{{\alpha _0}}} ). \label{theorem2-proof-eq12}
\end{flalign}

Combining \eqref{theorem1-proof-eq6}, \eqref{theorem1-proof-eq8} and \eqref{theorem2-proof-eq8}--\eqref{theorem2-proof-eq12} yields \eqref{theorem2-eq3}.


\vspace{-4mm}

\end{document}